\newtheorem{theorem}{Theorem}
\newtheorem{lemma}{Lemma}
\newtheorem{assumption}{Assumption}
\newtheorem{proposition}{Proposition}
\theoremstyle{remark}
\newcommand\oprocendsymbol{\hbox{\small $\blacksquare$}}
\newcommand\oprocend{\relax\ifmmode\else\unskip\hfill\fi\oprocendsymbol}
\newenvironment{remark}{\remarkx}{\oprocend\endremarkx}
\crefname{figure}{Figure}{Figures}
\crefname{equation}{}{}
\crefname{assumption}{Assumption}{Assumptions}
\DeclareMathOperator{\cl}{cl}
\DeclareMathOperator{\interior}{int}
\newcommand{\bbR}{\mathbb{R}}
\newcommand{\bbB}{\mathbb{B}}
\newcommand{\calM}{\mathcal{M}}
\newcommand{\calS}{\mathcal{S}}
\newcommand{\calL}{\mathcal{L}}
\newcommand{\calU}{\mathcal{U}}
\newcommand{\calA}{\mathcal{A}}
\newcommand{\calC}{\mathcal{C}}
\newcommand{\calK}{\mathcal{K}}
\renewcommand{\o}[1]{\overline{#1}}
\renewcommand{\d}[1]{\dot{#1}}
\tikzstyle{block} = [draw, rectangle, thick,minimum height=1em,minimum width=1em]
\tikzstyle{smallsum} = [draw,circle,inner sep=0mm,minimum size=3mm]
\tikzstyle{sum} = [draw,circle,inner sep=0mm,minimum size=6mm]
\tikzstyle{branch} = [draw,circle,inner sep=0.5mm,fill=black]
\tikzstyle{none} = [draw=none]
\tikzstyle{connector} = [->,thick]
\tikzstyle{line} = [thick]
\tikzset{%
  saturation block/.style={%
    draw, thick,
    path picture={
      \pgfpointdiff{\pgfpointanchor{path picture bounding box}{north east}}%
        {\pgfpointanchor{path picture bounding box}{south west}}
      \pgfgetlastxy\x\y
      \tikzset{x=\x*.4, y=\y*.4}
      \draw (-.9,0) -- (.9,0) (0,-.9) -- (0,.9);
      \draw (-.9,-.6) -- (-.6,-.6) -- (.6,.6) -- (.9,.6);
      \node[text width=.1cm] at (-.35,.55) {\scriptsize $P_\calC$};
    }
  }
}
\title{On the Differentiability of Projected Trajectories and the Robust Convergence of Non-convex Anti-Windup Gradient Flows
}
\author{Adrian Hauswirth$^\dagger$, Florian D\"orfler$^\dagger$,
Andrew Teel$^\ddagger$% <-this % stops a space
\thanks{$^\dagger$A. Hauswirth and F. D\"orfler are with the Department of Information Technology and Electrical Engineering, ETH Z\"urich,
                Zurich, Switzerland. Email:
                {\tt\small \{hadrian, dorfler\}@ethz.ch}.
}%
\thanks{$^\ddagger$A. Teel is with the is with the Department of Electrical and Computer Engineering, University of California, Santa Barbara, CA, USA.
E-mail: {\tt\small teel@ece.ucsb.edu}.
}%
\thanks{This work was supported by ETH Zurich funds, SNF AP Energy grant \#160573, mobility grant \#160573/2, and AFOSR grant FA9550-18-1-0246.
}%
}
\begin{document}
\maketitle
\thispagestyle{empty}
\pagestyle{empty}

\begin{abstract}
    This paper concerns a new class of discontinuous dynamical systems for constrained optimization. These dynamics are particularly suited to solve nonlinear, non-convex problems in closed-loop with a physical system. Such approaches using feedback controllers that emulate optimization algorithms have recently been proposed for the autonomous optimization of power systems and other infrastructures. In this paper, we consider feedback gradient flows that exploit physical input saturation with the help of anti-windup control to enforce constraints. We prove semi-global convergence of ``projected'' trajectories to first-order optimal points, i.e., of the trajectories obtained from a pointwise projection onto the feasible set. In the process, we establish properties of the directional derivative of the projection map for non-convex, prox-regular sets.
\end{abstract}

\begin{IEEEkeywords}
	Optimization, Stability of nonlinear systems
\end{IEEEkeywords}

\section{Introduction}
\IEEEPARstart{W}{hen} a trajectory of a continuous-time dynamical system is projected pointwise on a closed convex set, one obtains a ``projected'' trajectory (see \cref{fig:proj_traj}) that is in general not differentiable nor does it satisfy a particular law of motion. Nevertheless, these projected trajectories have interesting properties in their own right, but seem to have been largely ignored by the research community.

One particularly interesting context in which projected trajectories occur is a control loop with a saturated integrator. In this case, one can interpret the saturated control input as a signal projected on a set of feasible inputs, thus resulting in a projected trajectory. However, the main complexity lies in the fact that the unsaturated control signal is itself coupled with the saturated signal through feedback.

In this context, the cascade of an integrator and a saturation element is well-known to be prone to integrator windup which can seriously degrade performance. Anti-windup schemes are an effective and well-established solution to mitigate this problem~\cite{zaccarianModernantiwindupsynthesis2011,tarbouriechAntiwindupdesignoverview2009}. Moreover, the authors have recently shown that high-gain anti-windup schemes~\cite{hauswirthImplementationProjectedDynamical2020,hauswirthAntiWindupApproximationsOblique2020} applied to integral controllers can also be used to approximate projected dynamical systems~\cite{hauswirthProjectedDynamicalSystems2018,nagurneyProjectedDynamicalSystems1996, aubinDifferentialInclusionsSetValued1984}.

These facts are particularly useful in the context of feedback-based optimization which has recently garnered a lot of interest for applications such as the real-time control and optimization of power systems~\cite{molzahnSurveyDistributedOptimization2017}, communication networks~\cite{lowInternetCongestionControl2002}, and other infrastructure systems. Feedback-based optimization aims at designing feedback controllers that can steer a (stable) physical system to a steady state that solves a well-defined, but partially unknown, constrained optimization problem, for instance by designing feedback controllers to implement gradient~\cite{hauswirthProjectedGradientDescent2016,colombinoRobustnessGuaranteesFeedbackbased2019,colombinoOnlineOptimizationFeedback2019} or saddle-point flows~\cite{tangRealTimeOptimalPower2017,dallaneseOptimalPowerFlow2018,lawrenceOptimalSteadyStateControl2018} as a closed-loop behavior.

One aspect of feedback-based optimization is the exploitation of physical saturation to enforce (unknown or time-varying) input constraints. Within this context, we study in this paper a discontinuous dynamical system that arises as a feedback loop based on gradient flow, subject to saturation, and augmented with an anti-windup scheme (see \cref{fig:aw1}).

\begin{figure}[htb]
	\centering
	\subfloat[\label{fig:proj_traj}]{
		\centering
		\includegraphics[width=.40\columnwidth]{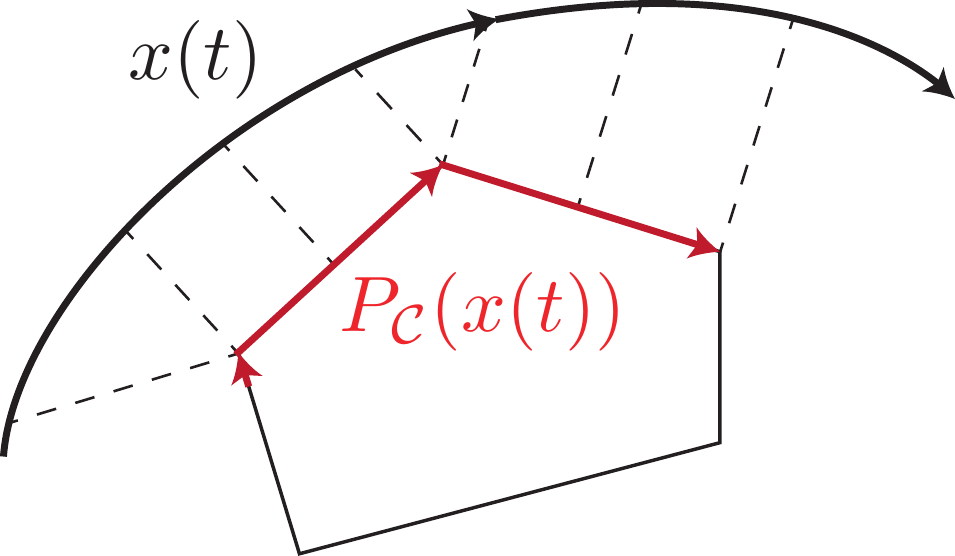}
	}
	\subfloat[\label{fig:aw1}]{
		\centering
		\begin{tikzpicture}[thick, scale = 0.65, every node/.style={scale=0.65}]
			\matrix[ampersand replacement=\&, row sep=0.3cm, column sep=.48cm] {

				\& \node[block] (awgain) {$\frac{1}{K}$}; \&
				\node[smallsum] (awsum) {};    \\

				\node[smallsum] (aw_inf) {};
				\& \node[block] (int) {$\int$};
				\& \node[branch](br1) {};
				\& \node[saturation block, minimum height=2.7em,minimum width=2.7em] (sat) {};
				\& \node[branch](br2){}; \\

				\node[none](edge) {};
				\& \& \node[block] (grad) {$- \nabla \Phi(\cdot) $}; \& \& \& \&
				\node[none](end) {};   \\
			};

			\draw[connector] (awgain.west)-|(aw_inf.north) node[at end, left]{$-$};
			\draw[connector] (aw_inf.east)--(int.west);
			\draw[line] (int.east)--(br1.west) node[at end, below] {$x$};
			\draw[connector] (br1.east)--(sat.west);
			\draw[line] (sat.east)--(br2.west);
			\draw[connector] (br2.south)|-(grad.east) {};
			\draw[line] (grad.west)-|(edge.center);
			\draw[connector] (edge.center)--(aw_inf.south) node[near end, right] {$+$};

			\draw[connector] (br2.north)|-(awsum.east) node[at end, above] {$-$} node[near end, above ]{$\overline{x} := P_\calC(x)$};
			\draw[connector] (br1.north)--(awsum.south) node[near end, right] {$+$};
			\draw[connector] (awsum.west)--(awgain.east);
		\end{tikzpicture}
	}
	\caption{a) Projected trajectory; b) Gradient feedback loop with anti-windup}
\end{figure}

\subsection{Simplified Problem Formulation}\label{sec:simp_prob_form}

Consider a closed convex set $\calC \subset \bbR^n$ and let $P_\calC$ denote the Euclidean minimum norm projection onto $\calC$, i.e.,  $P_\calC(x) = \arg \min_{y \in \calC} \| y - x \|$. Further, let $\Phi: \bbR^n \rightarrow \bbR$ be convex, continuously differentiable in a neighborhood of $\calC$, and have compact sublevel sets. We consider the dynamical system
\begin{align}\label{eq:aw_sys}
	\dot x = - \nabla \Phi(P_\calC(x)) - \tfrac{1}{K} \left( x - P_\calC(x) \right) \, ,
\end{align}
where $K > 0$ is fixed. Note in particular that $\nabla \Phi$ is evaluated at the point $P_\calC(x)$. \cref{fig:aw1} illustrates~\eqref{eq:aw_sys} as a feedback loop. We want to show that $t \mapsto P_\calC(x(t))$, where $x$ is a solution of~\eqref{eq:aw_sys}, converges to an optimizer of the problem
\begin{align}\label{eq:basic_prob}
	\text{minimize} \quad \Phi(x) \quad \text{subject to} \quad x \in \calC \, .
\end{align}

We call~\eqref{eq:aw_sys} an \emph{anti-windup approximation} of a projected gradient flow, because the term $\frac{1}{K}(x - P_\calC(x))$ can be realized by an anti-windup scheme as shown in \cref{fig:aw1}. Furthermore, in~\cite{hauswirthAntiWindupApproximationsOblique2020} it was shown that the solutions of~\eqref{eq:aw_sys} converge uniformly to the trajectory of a projected gradient flow~\cite{hauswirthProjectedGradientDescent2016} as $K\rightarrow 0^+$. Simulations and numerical examples for systems of the form~\eqref{eq:aw_sys} can be found in~\cite{hauswirthAntiWindupApproximationsOblique2020}.

In~\cite{hauswirthImplementationProjectedDynamical2020} it has also been noted that the system~\eqref{eq:aw_sys} bears similarities with the gradient flow
\begin{align}\label{eq:aw_pen_grad}
	\dot x = - \nabla \tilde{\Phi}(x) = - \nabla \Phi(x) - \tfrac{1}{K} \left( x - P_\calC(x) \right) \, .
\end{align}
where $\tilde{\Phi}(x) := \Phi(x) + \frac{1}{2K} d_\calC^2(x)$ and $d_\calC$ denotes the point-to-set distance to $\calC$. Namely, $\tilde{\Phi}$ is a cost function augmented with a term penalizing the distance from the feasible set $\calC$.

However, the inconspicuous difference between~\eqref{eq:aw_sys} and~\eqref{eq:aw_pen_grad} in the argument of $\nabla \Phi$ leads to two important contrasts:

First, if $x^\star$ is an equilibrium of~\eqref{eq:aw_sys}, then $P_\calC(x^\star)$ is a optimizer of~\eqref{eq:basic_prob}~\cite[Prop. 4]{hauswirthImplementationProjectedDynamical2020}. This is not the case for equilibria of~\eqref{eq:aw_pen_grad}; equilibria of~\eqref{eq:aw_pen_grad} are minimizers of $\tilde{\Phi}$ but not necessarily optimizers of~\eqref{eq:basic_prob}.
Second, convergence to the set of global minimizers of~$\tilde{\Phi}$ can be easily established for~\eqref{eq:aw_pen_grad}. However, proving convergence of solutions of~\eqref{eq:aw_sys} to optimizers of~\eqref{eq:basic_prob} is more challenging. In~\cite[Th. 6.4]{hauswirthAntiWindupApproximationsOblique2020} convergence was shown  under strong convexity and Lipschitz continuity of $\nabla \Phi$, and for small enough $K$.

\subsection{Contributions}

In this paper we show that (projected) trajectories of~\eqref{eq:aw_sys} converge to first-order optimal points of~\eqref{eq:basic_prob}, as postulated above, under the following weakened assumptions:
\begin{enumerate}
	\item We do not assume convexity of $\Phi$. Instead, we simply require differentiability and compact sublevel sets (on $\calC$) which are the minimal assumptions for standard gradient flows to be well-defined and convergent.

	\item We do not require convexity of $\calC$. Instead, we consider the class of (non-convex) prox-regular sets, which, roughly speaking, are those sets for which the projection $P_\calC$ is single valued in a neighborhood of $\calC$.
\end{enumerate}

In this general setup convergence is ``semi-global'', i.e., for every compact set of initial conditions, one can find $K$ small enough to guarantee convergence. However, if $\calC$ is convex, we show that~\eqref{eq:aw_sys} is globally convergent for any $K > 0$.

Hence, our results in this paper not only strengthen~\cite[Th. 6.4]{hauswirthAntiWindupApproximationsOblique2020}, but are also based on a different approach. In particular, as a by-product of our analysis, we establish properties of the directional derivative of $P_\calC$ for prox-regular sets. These results are potentially useful outside the scope of our problem for the study of projected trajectories.

\subsection{Solution Approach \& Related Work}

To show that solutions of~\eqref{eq:aw_sys} converge to optimizers of~\eqref{eq:basic_prob} we apply an invariance argument for which we need that $x \mapsto \Phi(P_\calC(x))$ is non-increasing along trajectories of~\eqref{eq:aw_sys}. However, to evaluate the Lie derivative of $\Phi \circ P_\calC$, $P_\calC$ needs to admit a derivative.

The differentiability of $P_\calC$ has been studied extensively, albeit---to the best of the authors' knowledge---only for convex sets $\calC$. Even if $\calC$ is convex, $P_\calC$ is in general not differentiable unless $\calC$ has a smooth boundary~\cite{fitzpatrickDifferentiabilityMetricProjection1982}.
Further, $P_\calC$ is not generally directionally differentiable~\cite{shapiroDirectionallynondifferentiablemetric1994,kruskalTwoConvexCounterexamples1969} unless \emph{second-order regularity} assumptions on $\calC$ are satisfied~\cite{shapiroDifferentiabilityPropertiesMetric2016,bonnansSensitivityAnalysisOptimization1998}. An up-to-date review of this subject including detailed examples can also be found in \cite{veermanNavigatingConvexSets2019}.
We avoid these technicalities because we require directional differentiability only along a trajectory (c.f. \cref{lem:exist}).

\subsection{Organization}

In \cref{sec:prelim} we fix the notation and recall relevant notions from variational geometry. \cref{sec:dir_deriv} studies directional derivatives of projection maps for prox-regular sets. In \cref{sec:main_prob} we state our main problem and results for which the proofs can be found in \cref{sec:main_proof}. Finally, \cref{sec:concl} summarizes our findings and discusses open questions.

\section{Preliminaries}\label{sec:prelim}

\subsection{Notation}
We consider $\bbR^n$ with the usual inner product $\left\langle \cdot, \cdot \right\rangle$ and 2-norm $\| \cdot \|$. The closed (open) unit ball of appropriate dimension is denoted by $\bbB$ ($\interior \bbB$). For a sequence $\{K_n \}$, the notation $K_n \rightarrow 0^+$ implies that that $K_n > 0$ for all $n$ and ${\lim}_{n \rightarrow \infty} K_n = 0$.
We use the standard definitions of \emph{outer/inner semicontinuity}, \emph{local boundedness}, etc. for set-valued maps $F: \bbR^n \rightrightarrows \bbR^m$~\cite[Ch. 5]{goebelHybridDynamicalSystems2012}.
Given a set $\calC \subset \bbR^n$, its \emph{closure} is denoted by $\cl{\calC}$. The \emph{distance function} is defined as $d_\calC(x) := \inf_{\tilde{x} \in \calC} \| x - \tilde{x} \|$. The \emph{projection mapping} $P_\calC : \bbR^n \rightrightarrows \calC$ is given by $P_\calC(x) := \{ \tilde{x} \in \calC \, | \, \| x - \tilde{x} \| = d_{\calC}(x) \}$.
We use the standard definition of \emph{(Bouligand) tangent cone of $\calC$ at $x \in \calC$} which we denote by $T_x \calC$~\cite[Ch. 6]{rockafellarVariationalAnalysis2009}. The set $\calC$ is \emph{Clarke regular} (or \emph{tangentially regular}) if it is closed and $x \mapsto T_x \calC$ is inner semicontinuous~\cite[Cor. 6.29]{rockafellarVariationalAnalysis2009}. If $\calC$ is Clarke regular, $N_x \calC$ denotes the normal cone of $\calC$ at $x \in \calC$ (which is defined as the polar cone of $T_x \calC$).

\subsection{Prox-Regular Sets}

Consider a Clarke regular set $\calC \subset \bbR^n$ and $x \in \calC$. Given $\alpha > 0$, a normal vector $\eta \in N_x \calC$ is \emph{$\alpha$-proximal} if $\left\langle \eta, y - x \right\rangle \leq \alpha \| \eta \| \| y - x \|^2$ for all $y \in \calC$ and $\calC$ is \emph{$\alpha$-prox-regular at $x$} if all normal vectors at $x$ are $\alpha$-proximal.
A set is \emph{$\alpha$-prox-regular} if it is $\alpha$-prox-regular at all $x \in \calC$.

As a specific example, note that every closed convex set is $\alpha$-prox-regular for any $\alpha > 0$. Furthermore, every set of the form $\calC = \{ x \, | \, g(x) \leq 0 \}$, where $g: \bbR^n \rightarrow \bbR^m$ has a globally Lipschitz derivative and constraint qualifications apply, is $\alpha$-prox-regular for some $\alpha > 0$~\cite[Ex.~7.7]{hauswirthProjectedDynamicalSystems2018}.

The following key properties of prox-regular sets are taken from \cite[Th. 2.2 \& Prop. 2.3]{adlyPreservationProxRegularitySets2016}.

\begin{proposition}\label{prop:prox}
	If $\calC \subset \bbR^n$ is $\alpha$-prox-regular, then for any $x \in \calC + \frac{1}{2\alpha} \interior \bbB$ the set $P_\calC(x)$ is a singleton and $d^2_\calC$ is differentiable with $\nabla (d^2_\calC(x)) = 2(x - P_\calC(x))$.
\end{proposition}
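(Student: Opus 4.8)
The plan is to establish the two assertions in sequence: first single-valuedness of $P_\calC$ on the open tube $\calC + \frac{1}{2\alpha}\interior\bbB$, and then differentiability of $d_\calC^2$ with the stated gradient, deducing the latter from the former by an elementary sandwich estimate. For single-valuedness, fix $x$ with $d := d_\calC(x) < \frac{1}{2\alpha}$; since a Clarke regular set is closed, $P_\calC(x)$ is nonempty. Suppose $p_1, p_2 \in P_\calC(x)$, so that $\|x - p_i\| = d$. The key observation is that each $\eta_i := x - p_i$ is a proximal normal of $\calC$ at $p_i$ (because $p_i \in P_\calC(p_i + \eta_i)$) and hence lies in $N_{p_i}\calC$. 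Prox-regularity at $p_i$ then gives $\langle \eta_i, y - p_i\rangle \le \alpha\|\eta_i\|\|y - p_i\|^2$ for all $y \in \calC$; applying this at $i=1$ with $y = p_2$ and at $i=2$ with $y = p_1$ and adding, the left-hand side telescopes to $\|p_1 - p_2\|^2$ while the right-hand side equals $2\alpha d\,\|p_1 - p_2\|^2$. Thus $(1 - 2\alpha d)\|p_1 - p_2\|^2 \le 0$, and since $2\alpha d < 1$ we conclude $p_1 = p_2$.

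With single-valuedness in hand, I would next record that the metric projection onto a closed set is always outer semicontinuous and locally bounded, so single-valuedness on the open tube upgrades $P_\calC$ to a continuous map there. Writing $f := d_\calC^2$ and $p := P_\calC(x)$, the differentiability claim follows by sandwiching the increment. Feasibility of $p$ as a candidate for projecting $x + h$ yields the upper bound $f(x+h) \le \|x + h - p\|^2 = f(x) + 2\langle x - p, h\rangle + \|h\|^2$. For the lower bound, set $p' := P_\calC(x+h)$, which is well defined for $\|h\|$ small since the tube is open, and use feasibility of $p'$ for projecting $x$ to obtain $f(x+h) - f(x) \ge \|x + h - p'\|^2 - \|x - p'\|^2 = 2\langle x - p', h\rangle + \|h\|^2$. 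Combining the two bounds gives
\[
	2\langle p - p', h\rangle + \|h\|^2 \;\le\; f(x+h) - f(x) - 2\langle x - p, h\rangle \;\le\; \|h\|^2 \, .
\]
By continuity, $\|p - p'\| \to 0$ as $h \to 0$, so $|\langle p - p', h\rangle| \le \|p - p'\|\,\|h\| = o(\|h\|)$; both bounds are therefore $o(\|h\|)$, which is exactly differentiability of $d_\calC^2$ at $x$ with $\nabla(d_\calC^2)(x) = 2(x - P_\calC(x))$.

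The hard part will be the single-valuedness step, and specifically the justification that $x - p_i \in N_{p_i}\calC$ so that the defining prox-regularity inequality is applicable, together with the symmetric telescoping that produces the clean factor $1 - 2\alpha d$. Once single-valuedness, and hence continuity, of $P_\calC$ on the tube is secured, the differentiability estimate is routine: both the upper bound (from $\|h\|^2$) and the lower bound (from continuity of $P_\calC$) are manifestly $o(\|h\|)$, and no further regularity of $\calC$ is required.
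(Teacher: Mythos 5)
Your proof is correct, but there is nothing in the paper to compare it against: the paper does not prove \cref{prop:prox} at all, it imports it verbatim from \cite[Th.~2.2 \& Prop.~2.3]{adlyPreservationProxRegularitySets2016}. Your argument is therefore a self-contained replacement rather than a variant of an in-paper proof, and both halves of it check out. For single-valuedness, the step you flagged as delicate---that $\eta_i = x - p_i$ lies in $N_{p_i}\calC$---is a standard fact: a projection residual is a proximal normal, proximal normals are regular normals, and the regular normal cone is the polar of the Bouligand tangent cone, which is exactly how the paper defines $N_{p_i}\calC$; alternatively, a two-line computation from $\|x-p_i\|\le\|x-y\|$ for all $y\in\calC$ gives $\langle x-p_i, w\rangle\le 0$ for every tangent vector $w$. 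Your symmetric telescoping $\|p_1-p_2\|^2=\langle \eta_1-\eta_2,\, p_2-p_1\rangle \le 2\alpha d\,\|p_1-p_2\|^2$ is, up to normalization of the normals, precisely the hypomonotonicity device that the paper proves separately as \cref{lem:hypomonot}, so your argument makes explicit that single-valuedness on the $\tfrac{1}{2\alpha}$-tube follows from the same mechanism the paper uses elsewhere. The differentiability half is the classical two-sided (sandwich) estimate, and its only analytic ingredient---continuity of $P_\calC$ at $x$---you obtain correctly from single-valuedness together with outer semicontinuity and local boundedness of projections onto closed sets; you also correctly note that $P_\calC(x+h)$ is well defined for small $h$ because the tube is open. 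Two side remarks: your argument in fact yields slightly more than the statement, namely that $d_\calC^2$ is $C^1$ on the tube, since the gradient $2(x-P_\calC(x))$ is continuous there; and what your elementary route does not recover is the local Lipschitz property of $P_\calC$ (\cref{lem:lip_prox}), which the paper gets from the same external reference and later needs---continuity alone suffices for \cref{prop:prox} itself, though.
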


\begin{lemma}\label{lem:proj_preimag}
	If $\calC \subset \bbR^n$ is $\alpha$-prox-regular, then $P_\calC(x + v) = x$ holds for every $x \in \calC$ and all $v \in N_x \calC \cap \frac{1}{2\alpha} \interior \bbB$. Further, for all $y \in \calC + \frac{1}{2\alpha} \interior \bbB$, we have $y - P_\calC(y) \in N_{P_\calC(y)} \calC$.
\end{lemma}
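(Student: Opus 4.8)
The plan is to derive both claims from the proximal characterization of the projection, using \cref{prop:prox} to guarantee that all projections appearing below are singletons on the relevant domain. Indeed, for the first claim, $x + v \in x + \frac{1}{2\alpha}\interior\bbB \subset \calC + \frac{1}{2\alpha}\interior\bbB$, and for the second claim $y \in \calC + \frac{1}{2\alpha}\interior\bbB$ by hypothesis; hence \cref{prop:prox} applies and each of $P_\calC(x+v)$ and $P_\calC(y)$ is a single point. The two claims are essentially converse to each other: the first says that moving off $\calC$ along a short proximal normal projects back to the base point, while the second says that the displacement of any nearby point from its projection is itself a normal vector.

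For the first claim, I would fix $x \in \calC$ and $v \in N_x\calC$ with $\|v\| < \frac{1}{2\alpha}$, and show that $x$ is the (unique) nearest point of $\calC$ to $x+v$ by a direct norm estimate. For arbitrary $y \in \calC$, expand $\|y - (x+v)\|^2 = \|y-x\|^2 - 2\langle v, y-x\rangle + \|v\|^2$. Since $\calC$ is $\alpha$-prox-regular at $x$ and $v \in N_x\calC$, the normal vector $v$ is $\alpha$-proximal, so $\langle v, y-x\rangle \leq \alpha\|v\|\|y-x\|^2$. Substituting gives $\|y - (x+v)\|^2 \geq (1 - 2\alpha\|v\|)\|y-x\|^2 + \|v\|^2$. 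Because $\|v\| < \frac{1}{2\alpha}$, the factor $1 - 2\alpha\|v\|$ is strictly positive, whence $\|y-(x+v)\|^2 \geq \|v\|^2 = \|x - (x+v)\|^2$, with equality only at $y = x$. Thus $x$ achieves the minimal distance and, since $P_\calC(x+v)$ is a singleton, $P_\calC(x+v) = x$.

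For the second claim, I would set $p := P_\calC(y)$ and first establish the proximal normal inequality. Optimality of $p$ means $\|z - y\|^2 \geq \|p - y\|^2$ for all $z \in \calC$; expanding $\|z - y\|^2 = \|z - p\|^2 + 2\langle z - p, p - y\rangle + \|p - y\|^2$ and rearranging yields $\langle y - p, z - p\rangle \leq \tfrac{1}{2}\|z - p\|^2$ for all $z \in \calC$. To convert this into membership in $N_p\calC$ (the polar of the Bouligand tangent cone $T_p\calC$), take any $w \in T_p\calC$, realized as a limit $(z_k - p)/t_k \to w$ with $z_k \in \calC$, $z_k \to p$, and $t_k \to 0^+$. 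Dividing the inequality by $t_k$ gives $\langle y - p, (z_k - p)/t_k\rangle \leq \tfrac{t_k}{2}\|(z_k-p)/t_k\|^2$; letting $k \to \infty$, the right-hand side vanishes and the left-hand side converges to $\langle y - p, w\rangle$, so $\langle y - p, w\rangle \leq 0$. As $w \in T_p\calC$ was arbitrary, $y - p \in (T_p\calC)^\circ = N_p\calC$.

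The main obstacle is the passage from the proximal inequality to genuine normal-cone membership in the second claim: the inequality only certifies that $y - p$ is a \emph{proximal} normal, and the limit argument over the tangent-cone definition is what upgrades this to membership in $N_p\calC$ as defined here via polarity. The only care required is that the sequences defining $w$ stay in $\calC$ so that the proximal inequality applies along them, and that $\|(z_k - p)/t_k\|$ stays bounded so the error term $\tfrac{t_k}{2}\|(z_k-p)/t_k\|^2$ indeed tends to zero; both hold by the definition of the tangent cone.
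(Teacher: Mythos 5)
Your proof is correct, but it differs from the paper in a basic way: the paper does not prove \cref{lem:proj_preimag} at all---it imports it (together with \cref{prop:prox} and \cref{lem:lip_prox}) verbatim from the cited reference on preservation of prox-regularity [Adly, Nacry, Thibault, Th.~2.2 \& Prop.~2.3]. What you have written is therefore a self-contained replacement for that citation, and it checks out: the first claim follows from expanding $\|y-(x+v)\|^2$ and applying the $\alpha$-proximality inequality $\langle v, y-x\rangle \leq \alpha\|v\|\|y-x\|^2$, giving $\|y-(x+v)\|^2 \geq (1-2\alpha\|v\|)\|y-x\|^2 + \|v\|^2$, which for $\|v\| < \tfrac{1}{2\alpha}$ shows $x$ is the strictly unique minimizer---note this strict inequality already gives single-valuedness of $P_\calC(x+v)$ on its own, so your appeal to \cref{prop:prox} there is superfluous (though harmless); the second claim correctly derives the proximal normal inequality $\langle y-p, z-p\rangle \leq \tfrac12\|z-p\|^2$ from optimality of $p = P_\calC(y)$ and then upgrades it to $y-p \in (T_p\calC)^\circ = N_p\calC$ by the limiting argument over sequences defining Bouligand tangent vectors, which is exactly the right move given that the paper defines $N_p\calC$ as the polar of $T_p\calC$ for Clarke regular (hence for prox-regular) sets. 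The trade-off between the two routes is the usual one: the paper's citation keeps the exposition short and leans on a general theory, whereas your argument is elementary, uses only the paper's own definition of $\alpha$-prox-regularity (valuable, since definitions of prox-regularity vary across the literature), and makes transparent that the second claim needs prox-regularity only to ensure $P_\calC(y)$ is a singleton---the normality of $y - P_\calC(y)$ holds for any closed Clarke regular set.
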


\begin{lemma}\label{lem:lip_prox}
	Let $\calC \subset \bbR^n$ be $\alpha$-prox-regular, then the projection $x \mapsto P_\calC(x)$ is locally Lipschitz on $\calC + \tfrac{1}{2\alpha} \interior \bbB$.
\end{lemma}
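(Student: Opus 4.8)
The plan is to derive an explicit Lipschitz estimate for $P_\calC$ by playing off the two normal-cone inclusions of \cref{lem:proj_preimag} against the $\alpha$-proximality inequality. Fix $x, y \in \calC + \tfrac{1}{2\alpha}\interior \bbB$ and write $p := P_\calC(x)$ and $q := P_\calC(y)$, which are well-defined singletons by \cref{prop:prox}. By \cref{lem:proj_preimag} we have $x - p \in N_p \calC$ and $y - q \in N_q \calC$, while by definition of the projection $\|x - p\| = d_\calC(x) < \tfrac{1}{2\alpha}$ and likewise $\|y - q\| = d_\calC(y) < \tfrac{1}{2\alpha}$.

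Next I would apply the $\alpha$-proximality inequality to each of these normal vectors, using the \emph{other} projection point as the test point in $\calC$. For the normal vector $x - p \in N_p \calC$ and test point $q \in \calC$ this gives $\left\langle x - p, q - p \right\rangle \leq \alpha \|x - p\| \, \|q - p\|^2$, and symmetrically $\left\langle y - q, p - q \right\rangle \leq \alpha \|y - q\| \, \|p - q\|^2$.

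The key step is to add these two inequalities and simplify the left-hand side. Writing $r := \|p - q\|$ and using $(x - p) - (y - q) = (x - y) + (q - p)$, the sum of the two left-hand sides collapses to $\left\langle x - y, q - p \right\rangle + r^2$. Hence
\[
r^2 \big( 1 - \alpha(\|x - p\| + \|y - q\|) \big) \leq \left\langle x - y, p - q \right\rangle \leq \|x - y\| \, r .
\]
Because $x$ and $y$ lie within distance $\tfrac{1}{2\alpha}$ of $\calC$, the factor $1 - \alpha(d_\calC(x) + d_\calC(y))$ is strictly positive; dividing by $r$ (the case $r = 0$ being trivial) yields
\[
\|P_\calC(x) - P_\calC(y)\| \leq \frac{\|x - y\|}{1 - \alpha\big(d_\calC(x) + d_\calC(y)\big)} .
\]

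Finally I would localize. The set $\calC + \tfrac{1}{2\alpha}\interior \bbB$ is open, and $d_\calC$ is globally $1$-Lipschitz, hence continuous; so around any base point one can choose a ball on which $d_\calC(x) + d_\calC(y)$ stays bounded away from $\tfrac{1}{\alpha}$, and on that ball the displayed estimate furnishes a uniform finite Lipschitz constant. I expect the main obstacle to be the algebraic rearrangement producing the first displayed inequality, together with the conceptual point that the bound $d_\calC < \tfrac{1}{2\alpha}$ is exactly what keeps the denominator positive — which is also why the conclusion is only \emph{local}, since the constant degrades as one approaches the boundary of the tube $\calC + \tfrac{1}{2\alpha}\interior \bbB$.
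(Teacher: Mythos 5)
Your proof is correct. A comparison is necessarily one-sided here, because the paper does not prove \cref{lem:lip_prox} at all: it imports the statement, together with \cref{prop:prox} and \cref{lem:proj_preimag}, as a known result from \cite[Th.~2.2 \& Prop.~2.3]{adlyPreservationProxRegularitySets2016}. Your argument therefore supplies what the paper omits, namely a self-contained derivation from the two other cited facts, and every step checks out: the inclusions $x - p \in N_p\calC$ and $y - q \in N_q\calC$ are licensed by \cref{lem:proj_preimag}; the two proximality inequalities do sum to $\left\langle x-y, q-p\right\rangle + \|p-q\|^2 \le \alpha\left(\|x-p\|+\|y-q\|\right)\|p-q\|^2$; and the strict bounds $d_\calC(x), d_\calC(y) < \tfrac{1}{2\alpha}$ inside the open tube $\calC + \tfrac{1}{2\alpha}\interior\bbB$ keep the factor $1 - \alpha\left(d_\calC(x)+d_\calC(y)\right)$ positive, so the division by $r$ and the final localization via continuity of $d_\calC$ are sound. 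Your route buys two things beyond the citation. First, an explicit modulus: $P_\calC$ is Lipschitz with constant $(1-2\alpha\rho)^{-1}$ on each sub-tube $\{x \,|\, d_\calC(x) \le \rho\}$ with $\rho < \tfrac{1}{2\alpha}$, which makes transparent why the conclusion is merely local and how the constant degrades toward the tube boundary. Second, a conceptual unification: adding proximality inequalities at two base points so that the cross terms collapse is precisely the hypomonotonicity mechanism the paper isolates in \cref{lem:hypomonot} and exploits again in \cref{lem:proj_equi} --- your computation is in effect that lemma applied to the unnormalized projection residuals $x - P_\calC(x)$ and $y - P_\calC(y)$, so the lemma you prove fits naturally into the paper's own toolkit rather than standing outside it.
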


Another crucial property of prox-regular sets is that the normal cone mapping $x \mapsto N_x \calC$ admits a \emph{hypomonotone localization}~\cite[Ex. 13.38]{rockafellarVariationalAnalysis2009}. We exploit this property through the following lemma which, in contrast to~\cite[Ex. 13.38]{rockafellarVariationalAnalysis2009}, quantifies the hypomonotonicity in terms of $\alpha$.

\begin{lemma}\label{lem:hypomonot}
	Let $\calC \subset \bbR^n$ be $\alpha$-prox-regular. Then, for all $x, x' \in \calC$, $\eta \in N_x \calC \cap \bbB$, and $\eta' \in N_{x'} \calC \cap \bbB$, we have
	\begin{align*}
		\left\langle \eta' - \eta, x'- x \right\rangle \geq - 2 \alpha \| x' - x \|^2 \, .
	\end{align*}
\end{lemma}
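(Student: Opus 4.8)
The plan is to prove the inequality by a direct, symmetric application of the definition of $\alpha$-prox-regularity at both base points $x$ and $x'$, and then to add the two resulting one-sided estimates. No geometric machinery beyond the $\alpha$-proximal inequality should be needed; the unit-ball restrictions $\eta, \eta' \in \bbB$ enter only to replace the point-dependent factors $\alpha \|\eta\|$ and $\alpha \|\eta'\|$ by the single uniform constant $\alpha$.

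First I would recall that, because $\calC$ is $\alpha$-prox-regular, every normal vector is $\alpha$-proximal. Applying the defining inequality to $\eta \in N_x \calC$ with the test point $y = x' \in \calC$ yields $\left\langle \eta, x' - x \right\rangle \leq \alpha \|\eta\| \, \|x' - x\|^2$, and applying it to $\eta' \in N_{x'} \calC$ with the test point $y = x \in \calC$ yields $\left\langle \eta', x - x' \right\rangle \leq \alpha \|\eta'\| \, \|x - x'\|^2$. This is the crux of the argument: the nonconvex geometry is captured entirely by these two bounds, and the choice of the \emph{opposite} base point as the test point in each is precisely what allows them to combine. Using $\|\eta\| \leq 1$ and $\|\eta'\| \leq 1$ together with $\|x - x'\| = \|x' - x\|$, both estimates weaken to $\left\langle \eta, x' - x \right\rangle \leq \alpha \|x' - x\|^2$ and $\left\langle \eta', x - x' \right\rangle \leq \alpha \|x' - x\|^2$.

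To conclude, I would write the target inner product as $\left\langle \eta' - \eta, x' - x \right\rangle = -\left\langle \eta', x - x' \right\rangle - \left\langle \eta, x' - x \right\rangle$, then substitute the two weakened bounds after negation and add them, which produces exactly $\left\langle \eta' - \eta, x' - x \right\rangle \geq -2\alpha \|x' - x\|^2$, with the factor $2\alpha$ arising because two proximal inequalities are summed. There is no substantial obstacle: the statement is essentially a quantified restatement of the proximal inequality invoked twice. The only point requiring care is the sign bookkeeping when splitting $\left\langle \eta' - \eta, x' - x \right\rangle$ into its two terms and matching each term to the correct base point, so that the proximal inequality is applied in the direction in which it holds.
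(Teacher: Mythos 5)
Your proposal is correct and follows essentially the same argument as the paper: apply the $\alpha$-proximal inequality at $x$ with test point $x'$ and at $x'$ with test point $x$, use $\|\eta\|, \|\eta'\| \leq 1$ to drop the norm factors, and add the two bounds. The sign bookkeeping you describe matches the paper's step of adding $\left\langle \eta, x'-x \right\rangle \leq \alpha \|x'-x\|^2$ and $\left\langle -\eta', x'-x \right\rangle \leq \alpha \|x'-x\|^2$ exactly.
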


\begin{proof}
	Since $0 \leq \| \eta\| \leq 1$ and $0 \leq \| \eta' \| \leq 1$ it follows from the definition of prox-regularity that
	\begin{align*}
		\left\langle \eta , x' - x \right\rangle    & \leq \alpha \| x' - x \|^2 \\
		\left\langle - \eta' , x' - x \right\rangle & \leq \alpha \| x' - x \|^2 \, .
	\end{align*}
	Adding up both inequalities yields the desired result.
\end{proof}

\subsection{Dynamical Systems \& Invariance Principle}

In general, we understand a dynamical system to be defined by a differential inclusion  (e.g.,~\cite{filippovDifferentialEquationsDiscontinuous1988}) of the form
\begin{align}\label{eq:incl}
	\dot x \in F(x) \, ,
\end{align}
where $F: \bbR^n \rightrightarrows \bbR^n$ is outer semicontinuous and locally bounded, and $F(x)$ is convex and non-empty for all $x \in \bbR^n$. A map $x:[0, T] \rightarrow \bbR^n$ for some $T>0$ is a \emph{solution of~\eqref{eq:incl}} if $x$ is absolutely continuous and $\dot x(t) \in F(x(t))$ holds for almost all $t \in [0, T]$. Existence of solutions for any $x(0) \in \bbR^n$ is guaranteed under the given assumptions on $F$. A \emph{complete} solution is a map $x: [0, \infty) \rightarrow \bbR^n$ such that the restriction to any compact subinterval $[0, T]$ is a solution.

Throughout the paper, we will mostly encounter differential inclusions that reduce to a continuous differential equation on an invariant subset of $\bbR^n$. In other words, on a subset $\calA$ of $\bbR^n$, $F$ in~\eqref{eq:incl} is a single-valued, continuous map and, moreover, any solution of~\eqref{eq:incl} starting in $\calA$ remains in $\calA$. In this case, a solution $x: [0, T] \rightarrow \calA$ to~\eqref{eq:incl} is continuously differentiable and satisfies $\dot{x}(t) = F(x(t))$ for all $t \in [0, T]$.

We require the following standard invariance principle for differential inclusions (see also \cite[Th. 2.10]{ryanIntegralInvariancePrinciple1998} and~\cite{bacciottiNonpathologicalLyapunovfunctions2006}):

\begin{theorem}\cite[Th. 8.2]{goebelHybridDynamicalSystems2012 }\label{thm:invar}
	Consider a continuous function $V: \bbR^n \rightarrow \bbR$, any functions $u: \bbR^n \rightarrow [- \infty, \infty]$, and a set $\calU \subset \bbR^n$ such that $u(x) \leq 0$ for every $x \in \calU$ and such that the growth of $V$ along solutions of~\eqref{eq:incl} is bounded by $u$ on $\calU$, i.e., any solution $x: [0, T] \rightarrow \calU$ of~\eqref{eq:incl} satisfies
	\begin{align*}
		V(x(t_1)) - V(x(t_0)) \leq \int_{t_0}^{t_1} u ( x(\tau)) d\tau \, .
	\end{align*}
	Let a complete and bounded solution $x$ of~\eqref{eq:incl} be such that $x(t) \in \calU$ for all $t \in [0, \infty)$. Then, for some $r \in V(\calU)$, $x$ approaches the nonempty set that is the largest weakly invariant subset of $V^{-1}(r) \cap \calU \cap \cl u^{-1}(0)$.
\end{theorem}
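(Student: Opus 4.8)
The plan is to prove this as a LaSalle-type invariance principle organized around the $\omega$-limit set $\Omega$ of the given complete, bounded solution $x$. First I would show that $t \mapsto V(x(t))$ is non-increasing: since $x(t) \in \calU$ for every $t$ and $u \leq 0$ on $\calU$, the growth bound gives $V(x(t_1)) - V(x(t_0)) \leq \int_{t_0}^{t_1} u(x(\tau))\, d\tau \leq 0$ whenever $t_1 \geq t_0$. Because $x$ is bounded and $V$ is continuous, $V \circ x$ is bounded below, hence converges to some $r := \lim_{t \to \infty} V(x(t))$.

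Next I would invoke the structural properties of $\Omega$. Under the standing hypotheses on $F$ (outer semicontinuous, locally bounded, with nonempty convex values), the solution set of~\eqref{eq:incl} is sequentially compact: uniform limits on compact intervals of uniformly bounded solutions are again solutions, via a compactness argument of Arzel\`a--Ascoli type in which convexity of $F(x)$ is used to pass the limit into the inclusion. This standard well-posedness fact implies that $\Omega$ is nonempty, compact, and weakly invariant, i.e., through every $z \in \Omega$ passes a complete solution that remains in $\Omega$. Continuity of $V$ together with $V(x(t)) \to r$ forces $V \equiv r$ on $\Omega$, so $\Omega \subseteq V^{-1}(r)$; moreover, since the trajectory stays in $\calU$, its limit points lie in $\cl \calU$, which (under the closedness of $\calU$ presumed in the cited setting) yields $\Omega \subseteq \calU$ and, picking any $z \in \Omega$, also $r = V(z) \in V(\calU)$.

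The crux is the inclusion $\Omega \subseteq \cl u^{-1}(0)$. Fix $z \in \Omega$ and let $y$ be a solution with $y(0) = z$ that remains in $\Omega$ (weak invariance). Along $y$ one has $V(y(t)) = r$ constant and $y(t) \in \calU$, so the growth bound reads $0 = V(y(t)) - V(y(0)) \leq \int_0^t u(y(\tau))\, d\tau \leq 0$; hence $\int_0^t u(y(\tau))\, d\tau = 0$ for all $t > 0$, and since $u \leq 0$ this forces $u(y(\tau)) = 0$ for almost every $\tau$. Taking $\tau_k \downarrow 0$ with $u(y(\tau_k)) = 0$ and using continuity of $y$ gives $z = \lim_k y(\tau_k) \in \cl u^{-1}(0)$. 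Combining the three inclusions, $\Omega \subseteq V^{-1}(r) \cap \calU \cap \cl u^{-1}(0)$.

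Finally, since $\Omega$ is itself weakly invariant and contained in the target set, it is contained in the \emph{largest} weakly invariant subset $M$ of $V^{-1}(r) \cap \calU \cap \cl u^{-1}(0)$, and $M \supseteq \Omega \neq \emptyset$. It remains to note that $x$ approaches $\Omega$, the standard consequence of boundedness: if $d(x(t_n), \Omega)$ were bounded away from zero along some $t_n \to \infty$, a convergent subsequence of $\{x(t_n)\}$ would have its limit in $\Omega$, a contradiction; hence $d(x(t), M) \leq d(x(t), \Omega) \to 0$. The step I expect to be the main obstacle is the rigorous weak invariance of $\Omega$, since it rests entirely on the sequential compactness of the solution set afforded by the hypotheses on $F$; once that is in hand, the LaSalle argument in the third paragraph is short, exploiting only that $u \leq 0$ turns the integral inequality into an equality along motions confined to $\Omega$.
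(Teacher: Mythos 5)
The paper does not actually prove \cref{thm:invar}: it is imported verbatim from \cite[Th.~8.2]{goebelHybridDynamicalSystems2012} (specialized from hybrid systems to the differential inclusion \eqref{eq:incl}), so there is no in-paper proof to compare yours against. Your argument is the canonical $\omega$-limit-set proof, and structurally it is the same route taken in the cited literature (see also \cite[Th.~2.10]{ryanIntegralInvariancePrinciple1998}, which the paper mentions alongside): monotone convergence of $V\circ x$ to some $r$; nonemptiness, compactness, and weak invariance of $\Omega$ via sequential compactness of the solution set, which is exactly where outer semicontinuity, local boundedness, and convexity of the values of $F$ are consumed; $V\equiv r$ on $\Omega$; the integral argument forcing $u$ to vanish along any motion confined to $\Omega$, giving $\Omega\subseteq \cl u^{-1}(0)$; and finally $\Omega\subseteq M$ together with $d(x(t),\Omega)\to 0$. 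All of these steps are sound.

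The one point on which rigor genuinely hinges is the inclusion $\Omega\subseteq\calU$, and you were right to hedge there. Boundedness only yields $\Omega\subseteq\cl(\mathrm{rge}\, x)\subseteq\cl\calU$, while the hypotheses constrain solutions only while they take values in $\calU$; you need $\Omega\subseteq\calU$ twice---once to conclude $r\in V(\calU)$, and, more critically, to apply the growth bound to the solution $y$ evolving in $\Omega$ in your third paragraph. So your parenthetical assumption that $\calU$ is closed is load-bearing: without it (or without assuming $\cl(\mathrm{rge}\, x)\subseteq\calU$) that step is not justified by the stated hypotheses. This is arguably a defect inherited from the statement as quoted rather than from your argument, but a careful write-up should make the assumption explicit. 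A second, milder technicality: deducing $u(y(\tau))=0$ for almost every $\tau$ from a vanishing integral tacitly assumes $u\circ y$ is measurable, which the blanket phrase ``any functions $u$'' does not grant; under the intended reading of the integral bound in \cite{goebelHybridDynamicalSystems2012} this is harmless, and all you really need is a sequence $\tau_k\downarrow 0$ with $u(y(\tau_k))=0$.
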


\section{Directional Derivatives of Projection Maps and Projected Trajectories}\label{sec:dir_deriv}

Next, given a closed set $\calC \subset \bbR^n$, we establish properties of the directional derivative of $P_\calC$.
Recall that the directional derivative of $P_\calC$ at $x \in \bbR^n$ in direction $v \in \bbR^n$ is defined as
\begin{align}\label{eq:def_direct}
	D P_\calC(x; v) := \underset{h \rightarrow 0^+}{\lim} \, \frac{ P_\calC (x + hv) - P_\calC(x)}{h} \, .
\end{align}

The classical result~\cite[Prop. III.5.3.5]{hiriart-urrutyConvexAnalysisMinimization1996} states that for convex $\calC$, $D P_\calC (x; v)$ exists for all $x \in \calC$  and all $v \in \bbR^n$ and is given as the projection of $v$ onto the tangent cone at $x$. Its generalization to $\alpha$-prox-regular sets is straightforward.

\begin{lemma}\label{lem:proj_dir_deriv}
	Let $\calC \subset \bbR^n$ be $\alpha$-prox-regular for some $\alpha > 0$. Then, $D P_\calC(x; v)$ exists for all $x \in \calC$ and all $v \in \bbR^n$ and is given by
	\begin{align*}
		D P_\calC(x; v) = P_{T_x \calC} \left( v  \right) = \lim_{h \rightarrow 0^+} \frac{P_\calC(x + h v) - x}{h} \, .
	\end{align*}
\end{lemma}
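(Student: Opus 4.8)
The plan is to reduce both claimed identities to a single limit and then characterize it cone-theoretically. Since $x \in \calC$ we have $d_\calC(x) = 0$ and hence $P_\calC(x) = x$, so the two difference quotients $(P_\calC(x+hv) - P_\calC(x))/h$ and $(P_\calC(x+hv) - x)/h$ coincide; it therefore suffices to show that $w_h := (P_\calC(x+hv) - x)/h$ converges to $P_{T_x \calC}(v)$ as $h \to 0^+$. For $h$ small enough, $x + hv \in \calC + \tfrac{1}{2\alpha}\interior\bbB$, so $p_h := P_\calC(x+hv)$ is a well-defined singleton by \cref{prop:prox}. By \cref{lem:lip_prox} the map $P_\calC$ is locally Lipschitz near $x$ with some constant $L$, whence $\|w_h\| \le L\|v\|$; thus $\{w_h\}$ is bounded and every sequence $h_n \to 0^+$ admits a convergent subsequence. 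It is enough to prove that every such subsequential limit $w$ equals $P_{T_x \calC}(v)$, since a bounded family with a unique cluster point converges.

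Fix a subsequence along which $w_{h_n} \to w$. First I would locate $w$ and $v - w$ in the tangent and normal cones, respectively. Since $p_{h_n} = x + h_n w_{h_n} \in \calC$, $p_{h_n} \to x$, and $(p_{h_n} - x)/h_n = w_{h_n} \to w$, the definition of the Bouligand tangent cone yields $w \in T_x \calC$. On the other hand, \cref{lem:proj_preimag} gives $(x + h_n v) - p_{h_n} = h_n(v - w_{h_n}) \in N_{p_{h_n}} \calC$, and since the normal cone is a cone this means $v - w_{h_n} \in N_{p_{h_n}} \calC$. Using that $p_{h_n} \to x$ together with outer semicontinuity of the normal cone mapping for Clarke regular sets, we conclude $v - w \in N_x \calC$.

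The main obstacle is the \emph{orthogonality} relation $\left\langle w, v - w \right\rangle = 0$, which upgrades the two cone memberships to a projection. One inequality is free: since $N_x \calC$ is the polar cone of $T_x \calC$, the memberships $v - w \in N_x \calC$ and $w \in T_x \calC$ give $\left\langle w, v - w \right\rangle \le 0$. For the reverse I would exploit prox-regularity directly. Applying the defining proximal inequality to the normal vector $\eta = h_n(v - w_{h_n}) \in N_{p_{h_n}} \calC$ and the point $y = x \in \calC$, and using $x - p_{h_n} = -h_n w_{h_n}$, yields after dividing by $h_n^2$ the bound $\left\langle v - w_{h_n}, w_{h_n} \right\rangle \ge - \alpha h_n \|v - w_{h_n}\| \, \|w_{h_n}\|^2$. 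The right-hand side vanishes as $n \to \infty$ because both factors are bounded, so passing to the limit gives $\left\langle w, v - w \right\rangle \ge 0$, and hence equality.

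Finally, $T_x \calC$ is a closed convex cone (because $\calC$ is Clarke regular) and $N_x \calC$ is its polar. The relations $w \in T_x \calC$, $v - w \in N_x \calC$, and $\left\langle w, v - w \right\rangle = 0$ are precisely the Moreau decomposition characterizing $w = P_{T_x \calC}(v)$. Since the subsequence was arbitrary, $w_h \to P_{T_x \calC}(v)$, which establishes both the existence of $D P_\calC(x; v)$ and the stated formula.
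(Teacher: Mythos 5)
Your proof is correct. One important point of context: the paper does not actually supply a proof of this lemma --- it cites the classical convex result of Hiriart-Urruty and Lemar\'echal and simply asserts that ``its generalization to $\alpha$-prox-regular sets is straightforward.'' So there is no written argument to compare against; your proposal is precisely the missing detail, and it is the natural one. The structure (boundedness of the difference quotients, subsequential limits, membership $w \in T_x\calC$ and $v - w \in N_x\calC$, orthogonality, Moreau decomposition) mirrors the classical convex proof, and you correctly isolate the two places where prox-regularity must replace convexity: single-valuedness of $P_\calC$ near $\calC$ (\cref{prop:prox}, \cref{lem:lip_prox}) and the reverse inequality $\langle w, v-w\rangle \ge 0$, which you obtain from the $\alpha$-proximal normal inequality with the quadratic error term $\alpha h_n \|v - w_{h_n}\|\|w_{h_n}\|^2$ vanishing in the limit --- this is exactly the step that is free for convex sets and needs the quantitative hypothesis here. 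Two minor remarks: the Lipschitz bound $\|w_h\| \le L\|v\|$ can be replaced by the elementary estimate $\|P_\calC(x+hv) - x\| \le d_\calC(x+hv) + h\|v\| \le 2h\|v\|$, avoiding \cref{lem:lip_prox} altogether; and where you invoke outer semicontinuity of $x \mapsto N_x\calC$, you could alternatively pass the proximal inequality $\langle v - w_{h_n}, y - p_{h_n}\rangle \le \alpha\|v-w_{h_n}\|\|y-p_{h_n}\|^2$ to the limit for each fixed $y \in \calC$, which exhibits $v-w$ directly as a proximal (hence regular) normal at $x$ using only the paper's own definitions. Your use of Clarke regularity to justify convexity of $T_x\calC$ for the Moreau decomposition is also the right observation, since without it the final characterization of $P_{T_x\calC}(v)$ would fail.
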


Characterizing the $D P_\calC(x; v)$ at $x \notin \calC$ is harder and directional differentiability is in general not guaranteed (see~\cite{kruskalTwoConvexCounterexamples1969,shapiroDirectionallynondifferentiablemetric1994}). However, the forthcoming \cref{lem:proj_traj_basic} guarantees that, along an absolutely continuous trajectory, the directional derivative of $P_\calC$ exists for almost all~$t$.

Assuming that $D P_\calC (x; v)$ exists, one can establish various properties. First of all, it immediately follows from the definition of the tangent cone that $D P_\calC (x; v)$ is \emph{viable}:

\begin{lemma}\label{lem:viab}
	If $\calC \subset \bbR^n$ is $\alpha$-prox-regular, $x \in \calC + \frac{1}{2\alpha} \interior \bbB$, $v \in \bbR^n$, and if $D P_\calC(x; v)$ exists, then $D P_\calC(x; v) \in T_{P_\calC(x)} \calC$.
\end{lemma}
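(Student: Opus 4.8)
The plan is to show directly that $DP_\calC(x;v)$ satisfies the sequential definition of the Bouligand tangent cone at $\bar x := P_\calC(x)$. First I would record single-valuedness along the relevant ray: since $x \in \calC + \frac{1}{2\alpha}\interior\bbB$ and this set is open, the point $x + hv$ stays in $\calC + \frac{1}{2\alpha}\interior\bbB$ for all sufficiently small $h > 0$. Hence \cref{prop:prox} applies and both $\bar x = P_\calC(x)$ and $\bar x_h := P_\calC(x + hv)$ are well-defined singletons for all small $h$. This step is the only place where the hypothesis $x \in \calC + \frac{1}{2\alpha}\interior\bbB$ is actually used.

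Next, by the definition \eqref{eq:def_direct}, the assumed limit $w := DP_\calC(x;v) = \lim_{h \to 0^+}(\bar x_h - \bar x)/h$ exists. The key observation is simply that $\bar x_h \in \calC$ for every small $h$, so the rescaled increments $(\bar x_h - \bar x)/h$ are difference quotients of feasible points based at $\bar x$ and they converge to $w$. I would then fix any sequence $h_k \to 0^+$ and set $w_k := (\bar x_{h_k} - \bar x)/h_k$; by construction $\bar x + h_k w_k = \bar x_{h_k} \in \calC$ and $w_k \to w$. This is precisely the sequential characterization of membership $w \in T_{\bar x}\calC$, which gives $DP_\calC(x;v) \in T_{P_\calC(x)}\calC$ as claimed. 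The Lipschitz continuity of $P_\calC$ from \cref{lem:lip_prox} guarantees $\bar x_h \to \bar x$, consistent with $h_k w_k \to 0$, though this is already implied by boundedness of $w_k$.

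I do not anticipate any substantive obstacle here: once single-valuedness of the projection along the ray is secured via \cref{prop:prox}, the statement is essentially a restatement of the definition of the tangent cone, the feasible points being the projections $\bar x_h$ themselves. The only point requiring minor care is confirming that $x + hv$ remains inside the single-valuedness neighborhood $\calC + \frac{1}{2\alpha}\interior\bbB$ for all small $h$, which follows from the openness of that set.
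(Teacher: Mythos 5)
Your proof is correct and matches the paper's approach: the paper gives no formal proof, stating only that the result ``immediately follows from the definition of the tangent cone,'' and your argument is precisely that observation made explicit --- the difference quotients $(P_\calC(x+hv)-P_\calC(x))/h$ are quotients of feasible points based at $P_\calC(x)$, so their limit lies in $T_{P_\calC(x)}\calC$ by the sequential definition of the Bouligand cone. Your preliminary step securing single-valuedness of $P_\calC$ near $x$ via openness of $\calC + \tfrac{1}{2\alpha}\interior\bbB$ and \cref{prop:prox} is exactly the right bookkeeping that the paper leaves implicit.
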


The next two lemmas exploit basic properties of $P_\calC$.

\begin{lemma}\label{lem:proj_traj_mono}
	Consider an $\alpha$-prox-regular set $\calC \subset \bbR^n$ and let $x \in \calC + \frac{1}{2\alpha} \interior \bbB$ and $v \in \bbR^n$ be such that $\o{v} := D P_\calC(x; v)$ exists. Then, we have $\left\langle v, \o{v} \right\rangle \geq 0$.
\end{lemma}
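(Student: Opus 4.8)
The plan is to track the moving projection $p(h) := P_\calC(x + hv)$ together with its associated normal vector $n(h) := (x + hv) - p(h)$, and to play the defining inequality of proximal normality off against itself at the two parameter values $h$ and $0$. For all sufficiently small $h \geq 0$ the point $x + hv$ remains in the open neighborhood $\calC + \tfrac{1}{2\alpha}\interior\bbB$ (since $x$ lies in this open set), so \cref{lem:proj_preimag} applies and gives $n(h) \in N_{p(h)}\calC$; by $\alpha$-prox-regularity each $n(h)$ is then an $\alpha$-proximal normal at $p(h)$. I prefer to use the defining inequality of $\alpha$-proximality directly rather than the normalized form in \cref{lem:hypomonot}, since that avoids rescaling the normal vectors.

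First I would write down $\alpha$-proximality of $n(0) \in N_{p(0)}\calC$ tested against $y = p(h) \in \calC$, and of $n(h) \in N_{p(h)}\calC$ tested against $y = p(0) \in \calC$, namely $\langle n(0), \Delta p\rangle \leq \alpha\|n(0)\|\,\|\Delta p\|^2$ and $\langle n(h), -\Delta p\rangle \leq \alpha\|n(h)\|\,\|\Delta p\|^2$, where $\Delta p := p(h) - p(0)$. Adding these and rearranging yields
\[
\langle n(h) - n(0), \Delta p\rangle \geq -\alpha\big(\|n(0)\| + \|n(h)\|\big)\|\Delta p\|^2 \, .
\]
The key algebraic observation is that $n(h) - n(0) = hv - \Delta p$, so the left-hand side equals $h\langle v, \Delta p\rangle - \|\Delta p\|^2$.

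Next I would isolate $\langle v, \Delta p\rangle$, divide through by $h^2 > 0$, and pass to the limit $h \rightarrow 0^+$. By hypothesis $\Delta p / h \rightarrow \o{v}$, while $\|n(h)\| \rightarrow \|n(0)\| = d_\calC(x)$ by continuity of $P_\calC$ (\cref{lem:lip_prox}). The limit then reads
\[
\langle v, \o{v}\rangle \geq \|\o{v}\|^2 \big(1 - 2\alpha\, d_\calC(x)\big) \, .
\]
Finally, the hypothesis $x \in \calC + \tfrac{1}{2\alpha}\interior\bbB$ is exactly the statement $d_\calC(x) < \tfrac{1}{2\alpha}$, hence the factor $1 - 2\alpha\, d_\calC(x)$ is strictly positive; since $\|\o{v}\|^2 \geq 0$, the conclusion $\langle v, \o{v}\rangle \geq 0$ follows (in fact with a strictly positive margin whenever $\o{v} \neq 0$).

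The step I expect to be most delicate is the passage to the limit, where the neighborhood hypothesis is used twice and in two distinct ways: once to guarantee that $n(h)$ is a genuine proximal normal for all small $h$ (so that \cref{lem:proj_preimag} and the proximal inequality are available), and once to ensure that the coefficient $1 - 2\alpha\, d_\calC(x)$ remains positive after taking limits. The remainder is the proximal inequality together with routine bookkeeping of the quotients.
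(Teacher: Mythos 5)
Your proof is correct, but it takes a genuinely different route from the paper's. The paper's argument for \cref{lem:proj_traj_mono} is a one-liner: the projection onto \emph{any} closed set is a monotone set-valued map (\cite[Cor.~12.20]{rockafellarVariationalAnalysis2009}), so $\left\langle P_\calC(x+hv)-P_\calC(x),\, hv \right\rangle \geq 0$ for every $h>0$, and dividing by $h^2$ and letting $h \rightarrow 0^+$ gives the claim; prox-regularity is not needed at all beyond making the statement well-posed. Your argument instead invokes prox-regularity essentially, playing the $\alpha$-proximal inequality at $p(0)$ and $p(h)$ against each other --- this is exactly the hypomonotonicity mechanism of \cref{lem:hypomonot}, used raw rather than in rescaled form. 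What you lose in generality you gain in strength: your limit inequality
\begin{align*}
\left\langle v, \o{v} \right\rangle \;\geq\; \bigl(1 - 2\alpha\, d_\calC(x)\bigr) \|\o{v}\|^2
\end{align*}
is a quantitative refinement that immediately yields not only \cref{lem:proj_traj_mono} but also \cref{lem:proj_equi} (if $\left\langle v, \o{v}\right\rangle = 0$ then $\o{v}=0$, since the prefactor is strictly positive when $d_\calC(x) < \tfrac{1}{2\alpha}$). In fact, your computation is structurally the same as the paper's proof of \cref{lem:proj_equi}, which rescales the normals $\eta, \eta_h$ to the unit ball and applies \cref{lem:hypomonot} to obtain $\left\langle v, \o{v}\right\rangle \geq \bigl(1 - \tfrac{2\alpha}{2\alpha+\epsilon}\bigr)\|\o{v}\|^2$; keeping $\|n(h)\|$ explicit as you do avoids both the rescaling and the auxiliary $\epsilon$. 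So your single argument unifies two of the paper's lemmas, at the cost of using a stronger hypothesis than the monotonicity proof requires for this particular lemma.
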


\begin{proof}
	Recall that for a closed set $\calC$, the projection $P_\calC$ is monotone~\cite[Cor.~12.20]{rockafellarVariationalAnalysis2009}.
	It follows that this property also holds in the limit by continuity of $P_\calC$ (\cref{lem:lip_prox}), i.e.,
	\begin{align*}
		\left\langle \o{v}, v \right\rangle
		= \lim_{h \rightarrow 0^+} \tfrac{ \left\langle P_\calC(x + hv) - P_\calC(x) , (x + hv) - x \right\rangle}{h^2} \geq 0 \, .
        \tag*{\qedhere}
	\end{align*}
\end{proof}

\begin{lemma}\label{lem:orth_prop}
	Let $\calC \subset \bbR^n$ be $\alpha$-prox-regular, $x \in \calC + \tfrac{1}{2\alpha} \interior \bbB$, $v \in \bbR^n$, and assume that $\o{v} := D P_\calC(x; v)$ exists. Then, it holds that $\left\langle \o{v}, x - P_\calC(x) \right\rangle  = 0$.
\end{lemma}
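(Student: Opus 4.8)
The plan is to establish the two inequalities $\left\langle \o{v}, x - P_\calC(x) \right\rangle \leq 0$ and $\left\langle \o{v}, x - P_\calC(x) \right\rangle \geq 0$ separately. Throughout, I would write $p := P_\calC(x)$ and $p_h := P_\calC(x + hv)$, so that $\o{v} = \lim_{h \rightarrow 0^+} (p_h - p)/h$ and $p_h \rightarrow p$ as $h \rightarrow 0^+$ by continuity of $P_\calC$ (\cref{lem:lip_prox}). Since $x$ lies in the open set $\calC + \tfrac{1}{2\alpha} \interior \bbB$, so does $x + hv$ for all sufficiently small $h > 0$; hence $P_\calC$ is single-valued and locally Lipschitz at all these points and the relevant normal-cone inclusions of \cref{lem:proj_preimag} apply.

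For the upper bound I would invoke the two preceding structural results directly. \cref{lem:viab} gives $\o{v} \in T_{p} \calC$, while the second part of \cref{lem:proj_preimag} gives $x - p \in N_{p} \calC$. Since an $\alpha$-prox-regular set is Clarke regular, $N_{p} \calC$ is by definition the polar cone of $T_{p} \calC$, whence $\left\langle \o{v}, x - p \right\rangle \leq 0$.

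The lower bound is the crux, and here I would exploit prox-regularity at the \emph{perturbed} projection $p_h$. For $h$ small enough that $x + hv \in \calC + \tfrac{1}{2\alpha}\interior\bbB$, \cref{lem:proj_preimag} gives $\eta_h := (x + hv) - p_h \in N_{p_h}\calC$. Applying the defining $\alpha$-proximal inequality for $\eta_h$ with the test point $p \in \calC$, namely $\left\langle \eta_h, p - p_h \right\rangle \leq \alpha \| \eta_h \| \| p - p_h \|^2$, and rearranging via the decomposition $(x + hv) - p_h = (x - p) + (p - p_h) + hv$, yields after dividing by $h$ a bound of the form
\begin{align*}
	\frac{\left\langle x - p, \, p_h - p \right\rangle}{h} \geq \frac{\| p - p_h \|^2}{h} + \left\langle v, p - p_h \right\rangle - \alpha \| \eta_h \| \frac{\| p - p_h \|^2}{h} \, .
\end{align*}

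The final step is to pass to the limit $h \rightarrow 0^+$. Local Lipschitz continuity of $P_\calC$ (\cref{lem:lip_prox}) gives $\| p - p_h \| = O(h)$, so that $\| p - p_h \|^2 / h \rightarrow 0$; moreover $\left\langle v, p - p_h \right\rangle \rightarrow 0$ since $p_h \rightarrow p$, and $\| \eta_h \| \rightarrow \| x - p \|$ remains bounded. Hence the right-hand side tends to $0$, giving $\left\langle \o{v}, x - p \right\rangle \geq 0$, and combined with the upper bound this yields the claimed equality. I expect the main subtlety to be the bookkeeping in the rearrangement producing the displayed inequality, together with the correct choice to apply the $\alpha$-proximal inequality at the perturbed point $p_h$ rather than at $p$ — anchoring at $p$ only reproduces the upper bound, so the reverse estimate genuinely requires the normal vector at $p_h$.
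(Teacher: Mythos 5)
Your proof is correct, but it follows a genuinely different route from the paper's. The paper's argument is a two-line computation built on \cref{prop:prox}: since $d^2_\calC$ is differentiable on $\calC + \tfrac{1}{2\alpha}\interior\bbB$ with $\nabla d^2_\calC(x) = 2(x - P_\calC(x))$, one differentiates $h \mapsto d^2_\calC(x+hv)$ at $h=0$ in two ways --- once by the chain rule with this gradient formula, giving $2\langle v, x - P_\calC(x)\rangle$, and once by expanding $d^2_\calC(x+hv) = \|x + hv - P_\calC(x+hv)\|^2$ and using the existence of $\o{v}$, giving $2\langle v - \o{v}, x - P_\calC(x)\rangle$; subtracting yields the orthogonality in one stroke. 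You instead split the equality into two one-sided estimates: the inequality $\langle \o{v}, x - P_\calC(x)\rangle \leq 0$ from cone polarity (\cref{lem:viab} plus the normal-cone inclusion of \cref{lem:proj_preimag}), and the reverse inequality from the $\alpha$-proximal inequality anchored at the perturbed projection $p_h$, combined with the Lipschitz bound $\|p_h - p\| = O(h)$ from \cref{lem:lip_prox}; your algebraic rearrangement and limit passage both check out. What the paper's route buys is brevity, at the cost of leaning on the nontrivial imported fact that $d^2_\calC$ is differentiable for prox-regular sets; what your route buys is self-containedness relative to the bare definition of prox-regularity and transparency about which geometric ingredient produces which inequality --- indeed your closing observation is correct and worth noting: anchoring the proximal inequality at $p$ rather than $p_h$ gives $\langle x-p, p_h - p\rangle \leq \alpha\|x-p\|\,\|p_h-p\|^2$, whose limit reproduces the upper bound without invoking polarity at all, so the entire lemma can be derived from the proximal inequality alone. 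Your second step is also very much in the spirit of the hypomonotonicity estimates (\cref{lem:hypomonot}) that the paper deploys later in the proof of \cref{lem:proj_equi}, so the two approaches reconnect there.
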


\begin{proof}
	Define the map $\phi(h) := x + hv$ for all $h \geq 0$.
	Using \cref{prop:prox} and the chain rule, we know that
	\begin{align*}
		\nabla \left(d_\calC^2 \circ \phi \right) |_{h = 0} = 2 \left\langle v, x - P_\calC(x) \right\rangle \, .
	\end{align*}
	On the other hand, we can apply the chain rule to $d_\calC^2(\phi(h)) = \| \phi(h) - P_\calC(\phi(h)) \|^2$ to arrive at
	\begin{align*}
		\nabla \left( \| \phi(h) - P_\calC(\phi(h)) \|^2 \right)|_{h = 0}
		= 2 \left \langle  v - \o{v}, x - P_\calC(x) \right\rangle \, .
	\end{align*}
	The difference of the expressions yields the result.
\end{proof}

\cref{lem:viab,lem:orth_prop} yield that $DP_\calC(x; v)$, if it exists, lies in $\calK(x) := T_x \calC \cap \{ v \, | \, \left\langle v, x - \o{x} \right\rangle = 0 \}$ which is known as the \emph{critical cone at $x$}. This observation is in agreement with~\cite{shapiroDifferentiabilityPropertiesMetric2016} and generalizes this insight from convex to prox-regular sets.

For the next crucial lemma we exploit the hypomonotone localization of $x \mapsto N_x \calC$ according to \cref{lem:hypomonot}.

\begin{lemma}\label{lem:proj_equi}
	Let $\calC \subset \bbR^n$ be $\alpha$-prox-regular, $x \in \calC + \tfrac{1}{2\alpha} \interior \bbB$, $v \in \bbR^n$, and assume that $\o{v} := D P_\calC(x; v)$ exists. Then,
	\begin{align*}
		\left\langle v, \o{v} \right\rangle  = 0 \quad \Longleftrightarrow \quad \o{v} = 0 \, .
	\end{align*}
\end{lemma}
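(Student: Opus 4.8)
The reverse implication is immediate, since $\o{v} = 0$ trivially gives $\left\langle v, \o{v} \right\rangle = 0$, so the whole content lies in the forward direction. The plan is to establish the sharper \emph{quantitative} estimate
\begin{align*}
	\left\langle v, \o{v} \right\rangle \geq \left(1 - 2\alpha\, d_\calC(x)\right) \| \o{v} \|^2 \, ,
\end{align*}
whose coefficient is strictly positive because $x \in \calC + \tfrac{1}{2\alpha} \interior \bbB$ forces $d_\calC(x) < \tfrac{1}{2\alpha}$, i.e.\ $2\alpha\, d_\calC(x) < 1$. Once this bound is in hand, the forward implication follows at once: if $\left\langle v, \o{v} \right\rangle = 0$, then $0 \geq (1 - 2\alpha\, d_\calC(x)) \| \o{v} \|^2 \geq 0$, which forces $\| \o{v} \| = 0$.

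To derive the estimate I would set $x_h := x + hv$ and $\o{x}_h := P_\calC(x_h)$, which is a singleton for all sufficiently small $h > 0$ by \cref{prop:prox} and continuity of $d_\calC$. By \cref{lem:proj_preimag} the vectors $\eta_h := x_h - \o{x}_h$ and $\eta_0 := x - \o{x}$ are proximal normals, $\eta_h \in N_{\o{x}_h} \calC$ and $\eta_0 \in N_{\o{x}} \calC$, with norms $\|\eta_h\| = d_\calC(x_h)$ and $\|\eta_0\| = d_\calC(x)$. The key step is a hypomonotonicity bound as in \cref{lem:hypomonot}, but applied to these \emph{non-normalized} normals; rather than invoking \cref{lem:hypomonot} after rescaling, it is cleaner to apply the defining $\alpha$-proximal inequality directly to $\eta_h$ tested against $\o{x} \in \calC$ and to $\eta_0$ tested against $\o{x}_h \in \calC$, and add the two. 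This yields
\begin{align*}
	\left\langle \eta_h - \eta_0, \o{x}_h - \o{x} \right\rangle \geq - \alpha \left( \| \eta_h \| + \| \eta_0 \| \right) \| \o{x}_h - \o{x} \|^2 \, .
\end{align*}

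Finally, I would substitute $\eta_h - \eta_0 = hv - (\o{x}_h - \o{x})$ into the left-hand side, divide through by $h^2$, and pass to the limit $h \to 0^+$. Using the definition $\o{v} = \lim_{h\to 0^+} (\o{x}_h - \o{x})/h$ from \cref{lem:proj_dir_deriv}, together with $\langle v, \o{x}_h - \o{x}\rangle/h \to \langle v, \o{v}\rangle$, $\|\o{x}_h - \o{x}\|^2/h^2 \to \|\o{v}\|^2$, and $\|\eta_h\| + \|\eta_0\| \to 2 d_\calC(x)$ by continuity of $d_\calC$, the inequality becomes $\langle v, \o{v}\rangle - \|\o{v}\|^2 \geq -2\alpha\, d_\calC(x) \|\o{v}\|^2$, which is exactly the claimed estimate. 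The only delicate point is the normalization in the hypomonotonicity step: the proximal normals here have norms bounded only by $d_\calC(x) < \tfrac{1}{2\alpha}$ rather than by $1$, so the argument must retain the factor $\|\eta_h\| + \|\eta_0\|$ explicitly, and it is precisely the resulting margin $1 - 2\alpha\, d_\calC(x) > 0$ that drives the conclusion.
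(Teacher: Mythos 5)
Your proof is correct and follows essentially the same route as the paper's: both expand $\left\langle v, \o{v} \right\rangle$ through the difference quotient, split off $\|\o{v}\|^2$, and control the cross term $\left\langle \eta_h - \eta_0, \o{x}_h - \o{x} \right\rangle$ via hypomonotonicity of the proximal normals, concluding from a strictly positive margin in front of $\|\o{v}\|^2$. The only difference is bookkeeping: the paper rescales the normals into the unit ball so as to invoke \cref{lem:hypomonot}, yielding the margin $1 - \tfrac{2\alpha}{2\alpha+\epsilon}$, whereas you apply the defining $\alpha$-proximal inequality to the unnormalized normals directly, which gives the slightly sharper explicit constant $1 - 2\alpha\, d_\calC(x)$ (and, as a minor citation point, the convergence $(\o{x}_h - \o{x})/h \to \o{v}$ is just the definition \cref{eq:def_direct} of the directional derivative, not \cref{lem:proj_dir_deriv}, which concerns $x \in \calC$).
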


\begin{proof}
	($\Leftarrow$) is trivial.
    For ($\Rightarrow$), consider $h > 0$ such that $x_h := x + hv \in \calC + \tfrac{1}{2\alpha} \interior \bbB$. Further, let $\o{x}_h = P_\calC(x_h)$ and $\o{x} := P_\calC(x)$, as well as $\eta := x - \o{x}$ and $\eta_h = x_h - \o{x}_h$. Recall that $\eta \in N_{\o{x}} \calC$ and $\eta_h \in N_{\o{x}_h} \calC$ (\cref{lem:proj_preimag}). Using these facts and the definition of $\o{v} = D P_\calC(x; v)$ in~\eqref{eq:def_direct} we can write
	\begin{align*}
		\left\langle v, \o{v} \right\rangle
		 & = \lim_{h \rightarrow 0^+} \tfrac{1}{h^2} \left\langle x_h - x , \o{x}_h - \o{x} \right\rangle                                   \\
		 & = \lim_{h \rightarrow 0^+} \tfrac{1}{h^2} \left\langle \o{x}_h + \eta_h - \o{x} + \eta, \o{x}_h - \o{x} \right\rangle            \\
		 & = \| \o{v} \|^2 + \underset{h \rightarrow 0^+}{\lim} \tfrac{1}{h} \left\langle \eta_h - \eta, \o{x}_h - \o{x} \right\rangle \, .
	\end{align*}

	Since, by assumption, $x \in \calC + \tfrac{1}{2\alpha} \interior \bbB$, there exists $\epsilon > 0$ such that $x, x_h \in \calC + \frac{1}{2\alpha + \epsilon}$ for small enough $h$. Therefore, $\| \eta\|$ and $\| \eta_h\|$ are both upper bounded by $\frac{1}{2\alpha + \epsilon}$.

	To apply \cref{lem:hypomonot} we rescale $\hat{\eta}:= (2\alpha + \epsilon) \eta$ and $\hat{\eta}_h := (2\alpha + \epsilon)\eta_h$ which satisfy $\hat{\eta}, \hat{\eta}_h \in \bbB$. It follows that
	\begin{align*}
		\left\langle v, \o{v} \right\rangle &
		= \| \o{v} \|^2
		+ \underset{h \rightarrow 0^+}{\lim} \frac{1}{(2 \alpha + \epsilon) h^2} \underbrace{\left\langle \o{x}_h - \o{x}, \hat{\eta}_h-  \hat{\eta}\right\rangle}_{\geq - 2 \alpha \| \o{x}_h - \o{x} \|^2 } \\
	   & \geq  \| \o{v} \|^2
		- \underset{h \rightarrow 0^+}{\lim} \frac{2\alpha }{(2 \alpha + \epsilon) h^2} \|\o{x}_h - \o{x} \|^2                                                                                                \\
		                                    & = \left(1 - \tfrac{2\alpha}{2\alpha + \epsilon} \right)\| \o{v}\|^2 \, .
	\end{align*}
	Since $\epsilon > 0$, we have $\tfrac{2\alpha}{2\alpha + \epsilon} < 1$ and thus $\left\langle v, \o{v} \right\rangle = 0$ implies that ${\o{v}} = 0$ which completes the proof.
\end{proof}

If $\calC$ is closed convex, Lemmas~\ref{lem:viab}-\ref{lem:proj_equi} simplify to the following facts (see also~\cite{fitzpatrickDifferentiabilityMetricProjection1982,shapiroDifferentiabilityPropertiesMetric2016}, and others):
\begin{lemma}\label{lem:dirderiv_cvx}
	Let $\calC \subset \bbR^n$ be closed convex and let $x \in \bbR^n$ and $v \in \bbR^n$ be such that $\o{v} := DP_\calC(x;v)$ exists. Then,
	\begin{enumerate}[label=(\roman*)]
		\item $\o{v} \in T_{P_\calC(x)} \calC \cap \{ w \, | \, \left\langle w, x - P_\calC(x) \right\rangle = 0 \}$,
		\item $\left\langle v, \o{v} \right\rangle \geq 0$, and
		\item $\left\langle v, \o{v} \right\rangle = 0 \quad \Longleftrightarrow \quad \o{v} = 0$.
	\end{enumerate}

\end{lemma}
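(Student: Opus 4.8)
The plan is to deduce all three claims directly from the prox-regular results \cref{lem:viab,lem:orth_prop,lem:proj_traj_mono,lem:proj_equi}, exploiting the fact that a closed convex set is $\alpha$-prox-regular for \emph{every} $\alpha > 0$ (as recalled in \cref{sec:prelim}). The only gap to bridge is that those lemmas are stated under the hypothesis $x \in \calC + \tfrac{1}{2\alpha}\interior\bbB$ for a fixed $\alpha$, whereas here $x \in \bbR^n$ is arbitrary. This is precisely where the freedom in $\alpha$ is used: given any $x$, the distance $d_\calC(x)$ is finite, so choosing $\alpha > 0$ small enough that $\tfrac{1}{2\alpha} > d_\calC(x)$ guarantees $x \in \calC + \tfrac{1}{2\alpha}\interior\bbB$. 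Since none of the conclusions of the cited lemmas reference $\alpha$, any such admissible choice suffices to transfer them.

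With an admissible $\alpha$ fixed, I would establish the claims in order. For (i), \cref{lem:viab} yields $\o{v} \in T_{P_\calC(x)}\calC$, while \cref{lem:orth_prop} gives $\left\langle \o{v}, x - P_\calC(x) \right\rangle = 0$; together these place $\o{v}$ in the stated intersection. Claim (ii), namely $\left\langle v, \o{v} \right\rangle \geq 0$, is exactly \cref{lem:proj_traj_mono}. Finally, claim (iii) is \cref{lem:proj_equi}: the reverse implication $\o{v} = 0 \Rightarrow \left\langle v, \o{v} \right\rangle = 0$ is immediate, and the forward implication is the nontrivial content established there via the hypomonotone localization of the normal cone map.

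I do not anticipate any genuine obstacle, since the statement is essentially a specialization of the prox-regular theory to the convex case. The only point demanding care is the handling of the neighborhood hypothesis: one must confirm that the ``for every $\alpha > 0$'' prox-regularity of convex sets really does allow the radius $\tfrac{1}{2\alpha}$ to be taken large enough to cover any prescribed $x$, and that the previously derived properties are genuinely $\alpha$-independent, so that a single admissible choice of $\alpha$ carries them over verbatim.
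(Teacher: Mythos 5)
Your proposal is correct and takes essentially the same route as the paper: the paper presents this lemma precisely as the specialization of \cref{lem:viab,lem:proj_traj_mono,lem:orth_prop,lem:proj_equi} to the convex case, relying on the fact that a closed convex set is $\alpha$-prox-regular for every $\alpha > 0$, so that $\tfrac{1}{2\alpha}$ can be taken larger than $d_\calC(x)$ for any given $x \in \bbR^n$. Your explicit handling of the neighborhood hypothesis---choosing $\alpha$ after $x$ and observing that the conclusions of the cited lemmas are $\alpha$-independent---is exactly the bridge the paper leaves implicit.
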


\subsection{Projected Trajectories}

As mentioned before, establishing directional differentiability of $P_\calC$, i.e., the existence of $DP_\calC(x; v)$ for all $x \in \bbR^n$ and all directions $v \in \bbR^n$ is a major challenge and in general not possible without additional assumptions on $\calC$.
For our purposes, we do not require directional differentiability of $P_\calC$ \emph{everywhere} and in \emph{all} directions because we consider only projected trajectories that come with a priori guarantees on the existence of their time derivative.

\begin{lemma}\label{lem:proj_traj_basic}
	Consider an $\alpha$-prox-regular set $\calC \subset \bbR^n$ and an absolutely continuous map $x: [0, T] \rightarrow  \calC + \frac{1}{2\alpha} \interior \bbB$ for some $T> 0$.
	Then, $\o{x} :=  P_\calC \circ x$ is single-valued and absolutely continuous.
	Furthermore, $\d{\o{x}}(t)$ and $\d{x}(t)$ exist and satisfy $\d{\o{x}}(t) = DP_\calC(x(t); \d{x}(t))$ for almost all $[0, T]$.
\end{lemma}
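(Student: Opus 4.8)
The plan is to verify the three claims in turn---single-valuedness, absolute continuity, and the derivative formula---relying on the regularity properties of $P_\calC$ established earlier. Single-valuedness is immediate: since $x(t) \in \calC + \frac{1}{2\alpha}\interior\bbB$ for every $t$, \cref{prop:prox} guarantees that $P_\calC(x(t))$ is a singleton, so $\o{x}(t) := P_\calC(x(t))$ is a well-defined point of $\calC$ for each $t \in [0,T]$. For absolute continuity I would invoke \cref{lem:lip_prox}: the image $x([0,T])$ is a compact subset of the open set $\calC + \frac{1}{2\alpha}\interior\bbB$, and a locally Lipschitz map is Lipschitz on any compact subset of its domain, so there is a constant $L$ with $\| P_\calC(x(s)) - P_\calC(x(t)) \| \leq L \| x(s) - x(t) \|$ for all $s, t$. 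Absolute continuity of $\o{x}$ then follows directly from that of $x$, since for any finite family of disjoint subintervals $(s_i, t_i)$ the sum $\sum_i \| \o{x}(t_i) - \o{x}(s_i) \|$ is bounded by $L \sum_i \| x(t_i) - x(s_i) \|$, which is small by absolute continuity of $x$. Being absolutely continuous, both $x$ and $\o{x}$ are differentiable almost everywhere, and since the intersection of two full-measure sets has full measure, $\d{x}(t)$ and $\d{\o{x}}(t)$ exist simultaneously for almost all $t$.

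The core of the argument is the identity $\d{\o{x}}(t_0) = DP_\calC(x(t_0); \d{x}(t_0))$ at any such $t_0$. Writing $v := \d{x}(t_0)$, existence of the derivative gives $x(t_0 + h) = x(t_0) + h v + o(h)$ as $h \to 0^+$. The key step is to replace the perturbed argument $x(t_0 + h)$ by its linearization $x(t_0) + h v$ using the Lipschitz bound: since $P_\calC$ is $L$-Lipschitz on a compact neighborhood containing both $x([0,T])$ and the points $x(t_0) + h v$ for small $h$,
\begin{align*}
	\left\| P_\calC(x(t_0 + h)) - P_\calC\bigl( x(t_0) + h v \bigr) \right\| \leq L \, \| o(h) \| = o(h) \, .
\end{align*}
Dividing by $h$ and letting $h \to 0^+$, this error vanishes. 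Hence the difference quotient $\frac{P_\calC(x(t_0) + h v) - P_\calC(x(t_0))}{h}$ differs from $\frac{\o{x}(t_0 + h) - \o{x}(t_0)}{h}$ by a term tending to zero; since the latter converges to $\d{\o{x}}(t_0)$ by differentiability of $\o{x}$ at $t_0$, the former converges to the same limit. By the definition~\eqref{eq:def_direct} this limit is exactly $DP_\calC(x(t_0); v)$, so $DP_\calC(x(t_0); \d{x}(t_0))$ exists and equals $\d{\o{x}}(t_0)$.

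The main obstacle is precisely this last point. At $x(t_0) \notin \calC$ directional differentiability of $P_\calC$ cannot be presumed---\cref{lem:proj_dir_deriv} applies only on $\calC$, and off $\calC$ directional differentiability may genuinely fail---so existence of $DP_\calC(x(t_0); \d{x}(t_0))$ must be \emph{deduced} rather than assumed. The Lipschitz estimate is what makes this work: it lets the almost-everywhere differentiability of the composition $\o{x}$ force the one-sided directional derivative of $P_\calC$ to exist along the trajectory, even where it would not exist for arbitrary directions. A minor technical care is the passage from ``locally Lipschitz on the open neighborhood'' to ``Lipschitz on the compact image,'' which follows from a standard finite-covering argument and also ensures the auxiliary points $x(t_0) + h v$ lie in the Lipschitz region for small $h$.
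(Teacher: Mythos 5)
Your proposal is correct and takes essentially the same route as the paper: single-valuedness from prox-regularity, absolute continuity of $\o{x}$ as the composition of a locally Lipschitz map (Lipschitz on the compact image $x([0,T])$ and a slight thickening of it) with an absolutely continuous map, and then almost-everywhere differentiability of both $x$ and $\o{x}$. Your explicit linearization step---using the Lipschitz bound to show $\left\| P_\calC(x(t_0+h)) - P_\calC\bigl(x(t_0)+h\d{x}(t_0)\bigr) \right\| = o(h)$, so that the difference quotients defining $\d{\o{x}}(t_0)$ and $DP_\calC(x(t_0);\d{x}(t_0))$ have the same limit---is precisely the detail the paper compresses into the phrase that the identity holds ``by definition of the time derivative of $\o{x}$'', so it is a welcome elaboration of the same argument rather than a different one.
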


\begin{proof}
	Since $\calC$ is $\alpha$-prox-regular, \cref{lem:lip_prox} guarantees that $P_\calC$ is Lipschitz in every closed neighborhood of $\calC$ that is a subset of $\calC + \frac{1}{2 \alpha} \interior \bbB$ (in particular $x([0, T])$ is compact by continuity of $x$). Since the composition of a Lipschitz map and an absolutely continuous function is absolutely continuous~\cite[Ex. 6.44]{roydenRealAnalysis1988}, it follows that $\o{x}$ is absolutely continuous and hence differentiable almost everywhere.

	Since $x$ and $\o{x}$ are differentiable everywhere except on zero measure sets $\Xi_x, \Xi_{\o{x}} \subset [0, T]$, respectively, it follows that $\dot{x}(t)$ and $\dot{\o{x}}(t)$ both exist except on the zero-measure set $\Xi_x \cup \Xi_{\o{x}}$ and $\d{\o{x}}(t) = DP_\calC(x(t); \d{x}(t))$ holds by definition of the time derivative of $\o{x}$.
\end{proof}

\begin{remark}
	The existence of $\d{\o{x}}(t)$ is in general independent of the existence of $\d{x}(t)$. On one hand, even if $\d{x}(t)$ exists, $\d{\o{x}}(t)$ might not exist because of a lack of directional differentiability. On the other hand, $\d{\o{x}}(t)$ might exist, even though $\d{x}(t)$ does not. This can occur, for instance, if $\calC = \{ 0 \}$ in which case $\o{x} \equiv 0$ is trivially differentiable everywhere.
\end{remark}

\section{Problem Formulation \& Main Results}\label{sec:main_prob}

Throughout the rest of the paper we consider the problem~\eqref{eq:basic_prob}, albeit under the following assumption:

\begin{assumption}\label{ass:basic}
	Let $\calC \subset \bbR^n$ be $\alpha$-prox-regular. Further, let $\Phi: \bbR^n \rightarrow \bbR$ be differentiable in a neighborhood of $\calC$ with compact sublevel sets $\calS_\ell := \{ x \in \calC \, | \, \Phi(x) \leq \ell \} $.
\end{assumption}
Under \cref{ass:basic}, $x^\star \in \calC$ is a \emph{critical point of~\eqref{eq:basic_prob}} (i.e., \emph{1st-order optimal}) if $\nabla \Phi(x^\star) \in - N_{x^\star} \calC$. Namely, local optimizers of~\eqref{eq:basic_prob} are critical~\cite[Th. 6.12]{rockafellarVariationalAnalysis2009}.

Instead of the dynamics~\eqref{eq:aw_sys}, we consider the inclusion
\begin{align}\label{eq:grad_aw_approx}
    \dot x \in F(x) := -  \nabla \Phi(P_\calC(x)) -  \tfrac{1}{K} \left( x - P_\calC(x) \right)
\end{align}
since $P_\calC$ is not necessarily single-valued outside $\calC + \frac{1}{2\alpha} \interior \bbB$. However, we will not concern ourselves with potential solutions outside of $\calC + \frac{1}{2\alpha} \interior \bbB$. Instead, we define the sets of admissible initial conditions (which we later show to be invariant) as
\begin{align*}%\label{eq:cl_set}
		\calC_\ell := \left\{ x \in \calC + \tfrac{1}{2\alpha} \interior \bbB \, | \, P_\calC(x) \in \calS_\ell \right\}
\end{align*}
which is the preimage of $\calS_\ell$ restricted to $\calC + \tfrac{1}{2\alpha} \interior \bbB$.

Our first main result guarantees that there always exists $K>0$ such that the projected trajectories of the anti-windup gradient flow~\eqref{eq:grad_aw_approx} converge to the critical points of~\eqref{eq:basic_prob}, although, $K$ may depend on the choice on $\ell$ and thereby on the set of initial conditions.

\begin{theorem}\label{thm:main}
    Under \cref{ass:basic} and given $\ell \in \bbR$, there exists $K^\star > 0$ such that~\eqref{eq:grad_aw_approx} admits a complete solution $x: [0, \infty) \rightarrow \calC_\ell$ for all $K \in (0, K^\star)$ and all initial conditions $x(0) \in \calC_\ell$.

    Further, for any such solution, the projected trajectory $\o{x} := P_\calC \circ x$ converges to the set of critical points of~\eqref{eq:basic_prob}.
\end{theorem}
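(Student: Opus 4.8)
**The plan is to establish convergence via the invariance principle (Theorem~\ref{thm:invar}), using $V(x) := \Phi(P_\calC(x))$ as a Lyapunov-like function.** The natural candidate is the composite cost along the projected trajectory, since the equilibria of~\eqref{eq:aw_sys} project to critical points of~\eqref{eq:basic_prob}. To apply the invariance principle with $\calU = \calC_\ell$, I need three ingredients: (a) existence and completeness of solutions remaining in $\calC_\ell$, (b) a bound on the growth of $V$ along trajectories by some $u(x) \leq 0$, and (c) identification of the largest weakly invariant set inside $V^{-1}(r) \cap \calC_\ell \cap \cl u^{-1}(0)$ with the critical points.

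**For existence and invariance of $\calC_\ell$,** I would first show that $\calC_\ell$ is forward invariant and that trajectories stay inside $\calC + \frac{1}{2\alpha}\interior\bbB$, where $P_\calC$ is single-valued and the dynamics~\eqref{eq:grad_aw_approx} reduce to a continuous differential equation. The anti-windup term $-\frac{1}{K}(x - P_\calC(x))$ points inward (toward $\calC$) and, by Proposition~\ref{prop:prox}, is proportional to $-\frac{1}{2}\nabla(d_\calC^2(x))$; this should prevent trajectories from escaping the tube $\calC + \frac{1}{2\alpha}\interior\bbB$ once $K$ is small enough, since the inward drift dominates the bounded gradient term $-\nabla\Phi(P_\calC(x))$ near the tube boundary. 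Showing that the $\Phi$-sublevel condition $P_\calC(x)\in\calS_\ell$ is preserved requires the decrease of $V$ below, and boundedness of the tube together with compactness of $\calS_\ell$ gives boundedness of $\calC_\ell$, hence completeness.

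**The decrease of $V$ is where the directional-derivative machinery of \cref{sec:dir_deriv} pays off.** Along a solution $x(\cdot)$ staying in the tube, \cref{lem:proj_traj_basic} gives that $\o x = P_\calC\circ x$ is absolutely continuous with $\dot{\o x}(t) = DP_\calC(x(t);\dot x(t))$ for almost all $t$. Writing $\o v := DP_\calC(x;\dot x)$ and abbreviating $g := \nabla\Phi(\o x)$, I compute
\begin{align*}
	\dot V = \langle \nabla\Phi(\o x), \dot{\o x}\rangle = \langle g, \o v\rangle \,.
\end{align*}
By \cref{lem:viab}, $\o v \in T_{\o x}\calC$, and since $\o x \in \calS_\ell \subset \calC$ we have $\dot x = -g - \frac{1}{K}(x - \o x)$ with $x - \o x \in N_{\o x}\calC$ by \cref{lem:proj_preimag}. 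Taking the inner product of $\o v$ with $\dot x$ and using that $\o v \in T_{\o x}\calC$ is orthogonal to the normal component (this is exactly \cref{lem:orth_prop}, $\langle \o v, x - \o x\rangle = 0$) yields $\langle \dot x, \o v\rangle = -\langle g, \o v\rangle$. On the other hand, \cref{lem:proj_traj_mono} gives $\langle \dot x, \o v\rangle \geq 0$, so $\dot V = \langle g, \o v\rangle = -\langle \dot x, \o v\rangle \leq 0$. Thus $V$ is non-increasing, and I set $u(x) := -\langle \dot x, \o v\rangle \leq 0$ (more precisely, a measurable selection giving this bound on the growth of $V$).

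**The main obstacle is the final invariance step: showing $u(x) = 0$ forces $x$ to be an equilibrium projecting to a critical point.** The condition $u = 0$ means $\langle \dot x, \o v\rangle = 0$, i.e. $\langle \dot x, DP_\calC(x;\dot x)\rangle = 0$. Here I invoke the key equivalence \cref{lem:proj_equi}: with $v = \dot x$ and $\o v = DP_\calC(x;\dot x)$, the identity $\langle v, \o v\rangle = 0$ implies $\o v = 0$, i.e. $\dot{\o x} = 0$, so the projected trajectory is stationary on the invariant set. The remaining work is to argue that a weakly invariant set on which $\dot{\o x}\equiv 0$ consists of points $x$ with $\dot x = 0$ (equilibria), and then apply the cited fact that equilibria of~\eqref{eq:aw_sys} project onto critical points of~\eqref{eq:basic_prob}. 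The delicate part is that $\dot{\o x} = 0$ does not immediately give $\dot x = 0$; I expect one must combine the orthogonal decomposition $\dot x = \o v + (\dot x - \o v)$ with the structure $\dot x = -g - \frac{1}{K}(x-\o x)$, showing that on the invariant set the tangential part of $\dot x$ vanishes and the normal balance $g = -\frac{1}{K}(x-\o x) \in -N_{\o x}\calC$ holds, which is precisely the criticality condition $\nabla\Phi(\o x)\in -N_{\o x}\calC$. Closing this gap — ruling out persistent drift with zero projected velocity — is the technical crux, and is where the choice of $K^\star$ small (relative to $\alpha$ and the gradient bound on $\calS_\ell$) enters to control the normal dynamics and keep trajectories within the single-valued region.
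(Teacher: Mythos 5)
Your overall architecture matches the paper's proof almost exactly: the invariance principle with $V = \Phi\circ P_\calC$ on $\calU = \calC_\ell$, tube invariance for small $K$ obtained from the inward-pointing anti-windup term (the paper computes the Lie derivative of $d_\calC^2$ and sets $K^\star = \tfrac{1}{2\alpha M}$ with $M$ a gradient bound on $\calS_\ell$), the decrease computation via \cref{lem:proj_traj_basic,lem:proj_traj_mono,lem:orth_prop}, and the use of \cref{lem:proj_equi} to convert $\langle \d{x},\d{\o{x}}\rangle = 0$ into $\d{\o{x}} = 0$. But your final step contains a genuine gap, and it is not merely an unfinished computation: the statement you aim to prove --- that the weakly invariant set on which $\d{\o{x}}\equiv 0$ consists of equilibria $\d{x}=0$ --- is \emph{false} in general, so no amount of work (and no smallness of $K$) can close the gap along that route. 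Points of the largest weakly invariant subset of $\calM = \cl\{x\in\calC_\ell \,|\, DP_\calC(x;F(x))=0\}$ need not be stationary: $x$ can keep moving inside a fiber $P_\calC^{-1}(y) = y + N_y\calC$ while $\o{x}\equiv y$ is frozen, which is exactly the phenomenon the paper flags in the remark after \cref{prop:invar} ($x$ moving ``around the corner'' of a polyhedron while $\o{x}$ sits at a vertex). A one-dimensional example: $\calC=[0,1]$, $\Phi(x)=x$, so $F(x) = -1 - x/K$ for $x<0$; every $x<0$ satisfies $DP_\calC(x;F(x))=0$, the negative half-line is invariant, yet $\d{x}\neq 0$ everywhere except at $x=-K$. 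Smallness of $K$ plays no role here --- it is needed only for the tube invariance step you already handled.

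The paper closes the argument differently (\cref{lem:invar_proj} and \cref{prop:opt_cond}). Along any complete solution evolving in the weakly invariant set $\calS$, $\d{\o{x}}=0$ a.e.\ together with absolute continuity of $\o{x}$ gives $\o{x}(t)\equiv y$. Hence $x(t)$ remains in the fiber $y + N_y\calC$ (\cref{lem:proj_preimag}) and obeys the \emph{linear} ODE $\d{x}(t) = -\nabla\Phi(y) - \tfrac{1}{K}\left(x(t)-y\right)$, which is asymptotically stable, so $x(t)$ converges to $\hat{x} = y - K\nabla\Phi(y)$. Since $x(t)-y\in N_y\calC$ for all $t$ and the normal cone is closed, the limit satisfies $-K\nabla\Phi(y) = \hat{x}-y \in N_y\calC$, i.e.\ $\nabla\Phi(y)\in -N_y\calC$, so $y$ is a critical point of~\eqref{eq:basic_prob}. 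Note that only the limit point of the invariant-set trajectory is an equilibrium; the trajectory itself need not be, and this characterization holds for every $K>0$. Replacing your final paragraph with this fiber-plus-linear-dynamics argument completes the proof; everything before it is sound and coincides with \cref{lem:exist,lem:nonincreas,lem:cl_bounded}, \cref{prop:comp_bound}, and \cref{prop:invar}.
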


\cref{thm:main} also applies to convex $\calC$ since convex sets are $\alpha$-prox-regular for any $\alpha > 0$. Nevertheless, we derive a stronger result that does not restrict the choice of initial condition or~$K$.

\begin{theorem}\label{thm:main2}
    If \cref{ass:basic} holds and $\calC$ is closed convex,~\eqref{eq:grad_aw_approx} admits a complete solution $x$ for all $K > 0$ and all $x(0) \in \bbR^n$.

    Further, for any such solution, the projected trajectory $\o{x} := P_\calC \circ x$ converges to the set of critical points of~\eqref{eq:basic_prob}.
\end{theorem}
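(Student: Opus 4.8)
The plan is to use convexity to remove every restriction present in \cref{thm:main} and then run a LaSalle-type invariance argument driven by the cost $V := \Phi \circ P_\calC$. Since a closed convex $\calC$ is $\alpha$-prox-regular for \emph{every} $\alpha > 0$, the tube $\calC + \tfrac{1}{2\alpha}\interior\bbB$ can be made to exhaust $\bbR^n$; hence by \cref{prop:prox} the projection $P_\calC$ is single-valued on all of $\bbR^n$, and by \cref{lem:lip_prox} it is globally Lipschitz. Consequently the right-hand side of~\eqref{eq:grad_aw_approx} is a single-valued continuous map $F$, so local solutions exist from every $x(0)\in\bbR^n$. To upgrade these to complete solutions I would establish boundedness in two steps: first show (below) that $V$ is non-increasing, so that $\o{x}(t)$ stays in the compact sublevel set $\calS_{\ell_0}$ with $\ell_0 = \Phi(\o{x}(0))$ and $M := \max_{\calS_{\ell_0}}\|\nabla\Phi\| < \infty$; second, using $\nabla d_\calC^2(x) = 2(x-\o{x})$ from \cref{prop:prox}, bound the distance to $\calC$ via
\[
	\tfrac{d}{dt}\tfrac12 d_\calC^2(x) = \langle x - \o{x}, \dot x\rangle = -\langle x-\o{x}, \nabla\Phi(\o{x})\rangle - \tfrac1K\|x-\o{x}\|^2 \le \|x-\o{x}\|\,\bigl(M - \tfrac1K\|x-\o{x}\|\bigr),
\]
which is negative whenever $d_\calC(x) > KM$. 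Hence $d_\calC(x(t)) \le \max(d_\calC(x(0)), KM)$ for \emph{every} $K>0$, so $x$ remains bounded and solutions are complete without any constraint on $K$ or $x(0)$.

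For the monotonicity of $V$, along any solution \cref{lem:proj_traj_basic} guarantees that for almost all $t$ both $\dot x(t)$ and $\dot{\o{x}}(t)$ exist with $\o{v} := \dot{\o{x}}(t) = DP_\calC(x(t); \dot x(t))$. Writing $v := \dot x = -\nabla\Phi(\o{x}) - \tfrac1K(x-\o{x})$ and using the orthogonality $\langle \o{v}, x - \o{x}\rangle = 0$ from \cref{lem:dirderiv_cvx} (equivalently \cref{lem:orth_prop}), the chain rule gives
\[
	\tfrac{d}{dt}\Phi(\o{x}) = \langle \nabla\Phi(\o{x}), \o{v}\rangle = \bigl\langle -v - \tfrac1K(x-\o{x}),\, \o{v}\bigr\rangle = -\langle v, \o{v}\rangle \le 0,
\]
where the last inequality is the viability/monotonicity property of \cref{lem:dirderiv_cvx}. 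Thus $V$ satisfies the hypotheses of \cref{thm:invar} with the non-positive rate $u(x) := -\langle F(x), DP_\calC(x; F(x))\rangle$.

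Applying \cref{thm:invar} on a bounded invariant region containing the precompact trajectory, $x$ approaches the largest weakly invariant subset $\calM$ of a level set of $V$ intersected with $\cl\,u^{-1}(0)$. On $u^{-1}(0)$ we have $\langle v, \o{v}\rangle = 0$, which by \cref{lem:dirderiv_cvx} (the convex specialization of \cref{lem:proj_equi}) forces $\o{v} = 0$, i.e. $\dot{\o{x}} = 0$. Taking $\calM$ to be the $\omega$-limit set (bounded and weakly invariant), through each of its points runs a complete, bounded solution along which $\o{x} \equiv \o{x}^\star$ is constant; hence this solution obeys the \emph{linear} ODE $\dot x = -\nabla\Phi(\o{x}^\star) - \tfrac1K(x - \o{x}^\star)$, whose only solution bounded for all forward and backward time is the equilibrium $x^\star = \o{x}^\star - K\nabla\Phi(\o{x}^\star)$. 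Thus every point of $\calM$ is such an equilibrium, and since $P_\calC(x^\star) = \o{x}^\star$, \cref{lem:proj_preimag} gives $-K\nabla\Phi(\o{x}^\star) = x^\star - \o{x}^\star \in N_{\o{x}^\star}\calC$, i.e. $\nabla\Phi(\o{x}^\star) \in -N_{\o{x}^\star}\calC$, so $\o{x}^\star$ is a critical point of~\eqref{eq:basic_prob}. By continuity of $P_\calC$, the projected trajectory $\o{x} = P_\calC\circ x$ converges to $P_\calC(\calM)$, which is a set of critical points.

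The step I expect to be the main obstacle is the characterization of $\calM$: translating the pointwise condition $\o{v}=0$ into the conclusion that the invariant motion is genuinely an equilibrium whose projection is critical. This hinges on invoking \emph{backward} weak invariance together with the boundedness of $\calM$ to exclude nontrivial drift along the frozen-$\o{x}$ linear dynamics, and on verifying the almost-everywhere validity of the chain rule and of $\dot{\o{x}} = DP_\calC(x;\dot x)$ along trajectories so that $u$ is a legitimate integrand in \cref{thm:invar}.
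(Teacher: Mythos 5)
Your proposal is correct and follows essentially the same route as the paper: global single-valuedness of $P_\calC$ for convex $\calC$, completeness and boundedness via the Lie derivative bound $\calL_F d_\calC^2(x) \le d_\calC(x)(M - \tfrac{1}{K}d_\calC(x))$ (the paper's \cref{prop:comp_bound_cvx}), monotonicity of $\Phi \circ P_\calC$ (\cref{lem:nonincreas}), the invariance principle of \cref{thm:invar} with \cref{lem:proj_equi} to get $DP_\calC(x;F(x))=0$ on the limit set (\cref{prop:invar}), and finally the frozen linear dynamics $\dot x = -\nabla\Phi(\o{x}^\star) - \tfrac{1}{K}(x-\o{x}^\star)$ on the invariant set. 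The only divergence is your last step: you invoke \emph{backward} weak invariance and boundedness to force every point of the limit set to be an equilibrium (the technicality you rightly flag as the main obstacle), whereas the paper's \cref{prop:opt_cond} avoids backward invariance entirely by letting the forward solution of the stable linear system converge to $\hat{x} = \o{x}^\star - K\nabla\Phi(\o{x}^\star)$ and using that $x(t) - \o{x}^\star \in N_{\o{x}^\star}\calC$ for all $t$ together with closedness of the normal cone to conclude $-\nabla\Phi(\o{x}^\star) \in N_{\o{x}^\star}\calC$.
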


If, in addition, $\Phi$ is convex, we find ourselves in the simplified setup of \cref{sec:simp_prob_form}. In this case, clearly, convergence is to the set of global optimizers of~\eqref{eq:basic_prob}.

\section{Proof of \cref{thm:main}}\label{sec:main_proof}

We apply \cref{thm:invar} by showing that $t \mapsto \Phi(P_\calC(x(t)))$ is non-increasing along any solution  of~\eqref{eq:grad_aw_approx}. Then, we prove that the limit set contains only critical points of~\eqref{eq:basic_prob}.

Throughout this section (and the next) we use the notation $\o{x} := P_\calC(x)$ for points and $\o{x} := P_\calC \circ x$ for trajectories.

Prox-regularity of $\calC$ and continuity of $\nabla \Phi$
guarantee the existence of solutions of~\eqref{eq:grad_aw_approx} in a neighborhood of~$\calC$:

\begin{lemma}\label{lem:exist}
    Under \cref{ass:basic}, there exists a solution of~\eqref{eq:grad_aw_approx} for every initial condition $x(0) \in \calC + \frac{1}{2\alpha} \interior \bbB$. More precisely, there exists a differentiable function $x: [0, T] \rightarrow \calC + \frac{1}{2\alpha} \interior \bbB$ for some $T > 0$ that satisfies for all $t \in [0, T]$
    \begin{align*}
        \dot x(t) = -  \nabla \Phi(P_\calC(x(t))) + \tfrac{1}{K} \left( x(t) - P_\calC(x(t)) \right) \, .
    \end{align*}
\end{lemma}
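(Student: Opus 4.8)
The plan is to exploit the fact that, although $F$ in~\eqref{eq:grad_aw_approx} is in general set-valued and discontinuous, its restriction to the open set $\calU := \calC + \tfrac{1}{2\alpha} \interior \bbB$ is a single-valued continuous map, and then to invoke a standard short-time existence result for ordinary differential equations with continuous right-hand side (e.g.\ Peano's theorem). The multivaluedness of $F$ comes entirely from $P_\calC$ failing to be single-valued, and prox-regularity removes exactly this obstruction on $\calU$, which localizes the problem to a region where no differential-inclusion machinery is needed.

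First I would establish single-valuedness and continuity of $F$ on $\calU$. By \cref{prop:prox}, for every $x \in \calU$ the set $P_\calC(x)$ is a singleton, so $x \mapsto P_\calC(x)$ is single-valued on $\calU$; by \cref{lem:lip_prox} it is moreover locally Lipschitz, hence continuous, and it takes values in $\calC$. Since $\nabla \Phi$ is continuous on a neighborhood of $\calC$ by \cref{ass:basic}, the composition $x \mapsto \nabla \Phi(P_\calC(x))$ is continuous on $\calU$ as a composition of continuous maps, and the anti-windup term $x \mapsto \tfrac{1}{K}(x - P_\calC(x))$ is continuous for the same reason. Hence $F|_{\calU}$ is a single-valued continuous map, and along any solution staying in $\calU$ the inclusion~\eqref{eq:grad_aw_approx} reduces to the displayed ODE in the sense described after~\eqref{eq:incl}.

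Next, given $x(0) \in \calU$, I would note that $\calU$ is open (Minkowski sum of the closed set $\calC$ and the open ball $\interior\bbB$) and apply Peano's existence theorem to $\dot x = F(x)$ to obtain a local solution $x$. Since $x$ is continuous with $x(0)$ in the open set $\calU$, there is $T > 0$ with $x([0,T]) \subset \calU$; restricting to $[0,T]$ yields the claimed map into $\calU$. Continuity of $F|_{\calU}$ together with $\dot x(t) = F(x(t))$ then forces $\dot x$ to be continuous, i.e.\ $x$ is $C^1$ (in fact differentiable everywhere on $[0,T]$, as required). Observe that neither convexity of $\calC$ nor of $\Phi$ is used here—only prox-regularity (for single-valuedness and local Lipschitzness of $P_\calC$) and continuity of $\nabla\Phi$.

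The only genuinely delicate point—and the step I would treat most carefully—is precisely this passage from the global inclusion to a bona fide continuous ODE: one must verify that all irregularity of $F$ is confined to the complement of $\calU$, so that on $\calU$ the right-hand side is continuous and single-valued and the elementary Peano argument applies. Everything else (continuity of compositions, openness of $\calU$, restriction to a short interval) is routine and requires no further hypotheses.
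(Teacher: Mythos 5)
Your proof is correct and takes essentially the same route as the paper: prox-regularity makes $P_\calC$ single-valued and (locally Lipschitz, hence) continuous on the open set $\calC + \tfrac{1}{2\alpha}\interior\bbB$, so $F$ restricts there to a continuous single-valued map and Peano's theorem gives a local $C^1$ solution. The paper's own proof is just a terser version of this argument, citing \cref{lem:lip_prox} and standard ODE existence results.
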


\begin{proof}
    From \cref{lem:lip_prox} it follows that $P_\calC$ is single-valued and continuous for all $x \in \calC + \frac{1}{2\alpha} \interior \bbB$. Further, since $\nabla \Phi$ is continuous, $F$ is continuous. Hence, standard results for continuous ODEs guarantee the existence of a local solution for every initial condition on the open set $\calC + \frac{1}{2\alpha} \interior \bbB$.
\end{proof}

\subsection{Convergence to Invariant Set}

To show that $\Phi$ is non-increasing along projected trajectories of~\eqref{eq:grad_aw_approx} we use the lemmas in \cref{sec:dir_deriv}. Further, to apply \cref{thm:invar} we need to show that (unprojected) trajectories of~\eqref{eq:grad_aw_approx} are complete and bounded, which is possible, in general, only for small enough  $K$ (unless $\calC$ is convex).

\begin{lemma}\label{lem:nonincreas}
	Let \cref{ass:basic} hold. Given a solution $x : [0, T] \rightarrow \calC + \frac{1}{2\alpha} \interior \bbB$ of~\eqref{eq:grad_aw_approx} for some $T> 0$, the map $t \mapsto \Phi(\o{x}(t))$ is non-increasing for all $t \in [0, T]$.
\end{lemma}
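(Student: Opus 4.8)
The plan is to show that the a.e.\ time derivative of $t \mapsto \Phi(\o{x}(t))$ is non-positive, and then to upgrade this to monotonicity via absolute continuity. First I would record the hypotheses supplied by \cref{lem:exist}: the solution $x$ is differentiable on $[0,T]$ with $\d{x} = -\nabla\Phi(\o{x}) - \tfrac{1}{K}(x - \o{x})$, and takes values in $\calC + \tfrac{1}{2\alpha}\interior\bbB$. This is precisely the setting of \cref{lem:proj_traj_basic}, which then guarantees that $\o{x} = P_\calC \circ x$ is absolutely continuous and that, for almost every $t$, both $\d{x}(t)$ and $\do{x}(t)$ exist and satisfy $\do{x}(t) = DP_\calC(x(t); \d{x}(t))$.

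Next, at any such $t$ I would apply the chain rule to $\Phi$ (which is continuously differentiable in a neighborhood of $\calC$, and $\o{x}(t) \in \calC$) composed with the map $\o{x}$, which is differentiable at $t$, to obtain $\tfrac{d}{dt}\Phi(\o{x}(t)) = \left\langle \nabla\Phi(\o{x}(t)), \do{x}(t) \right\rangle$. Substituting the dynamics in the form $\nabla\Phi(\o{x}) = -\d{x} - \tfrac{1}{K}(x - \o{x})$ gives
\begin{align*}
	\tfrac{d}{dt}\Phi(\o{x}(t)) = -\left\langle \d{x}(t), \do{x}(t) \right\rangle - \tfrac{1}{K}\left\langle x(t) - \o{x}(t), \do{x}(t) \right\rangle \, .
\end{align*}

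The two structural facts that close the argument are exactly the lemmas of \cref{sec:dir_deriv}, applied with $v = \d{x}(t)$ and $\o{v} = \do{x}(t) = DP_\calC(x(t); \d{x}(t))$. \cref{lem:orth_prop} yields $\left\langle \do{x}(t), x(t) - \o{x}(t) \right\rangle = 0$, so the $\tfrac{1}{K}$-term vanishes entirely (notably irrespective of its sign), and \cref{lem:proj_traj_mono} yields $\left\langle \d{x}(t), \do{x}(t) \right\rangle \geq 0$. Hence $\tfrac{d}{dt}\Phi(\o{x}(t)) = -\left\langle \d{x}(t), \do{x}(t) \right\rangle \leq 0$ for almost all $t \in [0,T]$.

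Finally, since $\o{x}$ is absolutely continuous with values in the compact set $\o{x}([0,T]) \subset \calC$ on which $\Phi$ is $C^1$, the composition $\Phi \circ \o{x}$ is absolutely continuous; an absolutely continuous function with non-positive derivative almost everywhere is non-increasing, which is the claim. The computation itself is short, because all the difficulty has been front-loaded into the directional-derivative machinery: the genuine content is the a.e.\ identity $\do{x} = DP_\calC(x; \d{x})$ together with the orthogonality and monotonicity properties, furnished by \cref{lem:proj_traj_basic,lem:orth_prop,lem:proj_traj_mono}. The only subtlety remaining in this proof is to keep everything pointwise almost everywhere and to pass from a.e.\ non-positivity to global monotonicity through absolute continuity rather than through a classical derivative.
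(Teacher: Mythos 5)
Your proof is correct and follows essentially the same route as the paper's: invoke \cref{lem:proj_traj_basic} for the a.e.\ identity $\do{x} = DP_\calC(x;\d{x})$, then combine the chain rule with the orthogonality of \cref{lem:orth_prop} (to eliminate the $\tfrac{1}{K}(x-\o{x})$ term) and the monotonicity of \cref{lem:proj_traj_mono} to get $\tfrac{d}{dt}\Phi(\o{x}(t)) = -\left\langle \d{x}(t), \do{x}(t)\right\rangle \leq 0$ almost everywhere. Your closing step---passing from a.e.\ non-positivity of the derivative to monotonicity via absolute continuity of $\Phi\circ\o{x}$---is stated only implicitly in the paper, so making it explicit is a welcome refinement rather than a deviation.
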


\begin{proof}
	\cref{lem:proj_traj_mono,lem:orth_prop,lem:proj_traj_basic} yield, for almost all $t \in [0, T]$,
	\begin{align*}
		\tfrac{d}{dt} \Phi(\o{x}(t))
		 & = \left\langle \nabla \Phi(\o{x}(t)), \d{\o{x}}(t) \right\rangle                           \\
		 & = \left\langle \nabla \Phi(\o{x}(t)) + \tfrac{1}{K}(x(t) - \o{x}(t)), \d{\o{x}}(t) \right\rangle \\
		 & = \left\langle - \d{x}(t), \d{\o{x}}(t) \right\rangle \leq 0 \, 
	\end{align*}
	and we conclude that $\Phi \circ \o{x}$ is non-increasing.
\end{proof}

\begin{lemma}\label{lem:cl_bounded}
	Under \cref{ass:basic}, $\calC_\ell$ is bounded $\forall\ell\in\bbR$.
\end{lemma}

\begin{proof}
	The set $\calC_\ell$ is as the preimage of $\calS_\ell$ under $P_\calC$ restricted to $\calC + \frac{1}{2\alpha} \interior \bbB$.
	From \cref{lem:proj_preimag} it follows that for any $x \in \calC$ we have $P_\calC^{-1}(x) = x + N_x \calC \cap \frac{1}{2\alpha} \interior \bbB \subset x + \frac{1}{2\alpha} \interior \bbB$.  Since $\calS_\ell$ is compact, $\calC_\ell \subset \calS_\ell + \frac{1}{2\alpha} \interior \bbB$ is bounded.
\end{proof}
\begin{proposition}\label{prop:comp_bound}
	Let \cref{ass:basic} hold. Given $\ell \in \bbR$, there exists $K^\star > 0$ such that~\eqref{eq:grad_aw_approx} admits a complete solution $x : [0, \infty) \rightarrow \calC_\ell$ for every $x(0) \in \calC_\ell$ and for all $K \in (0, K^\star)$.
\end{proposition}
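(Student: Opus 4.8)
The plan is to establish forward invariance of $\calC_\ell$ for sufficiently small $K$ and then upgrade the local solution guaranteed by \cref{lem:exist} to a complete one via a standard continuation argument. Membership in $\calC_\ell$ is governed by two conditions: the projection condition $P_\calC(x) \in \calS_\ell$ and the proximity condition $d_\calC(x) < \tfrac{1}{2\alpha}$. The first is handled for free by \cref{lem:nonincreas}: along any solution $\Phi(\o{x}(t)) \le \Phi(\o{x}(0)) \le \ell$, so $\o{x}(t) = P_\calC(x(t)) \in \calS_\ell$ is maintained for every $K > 0$. The entire difficulty thus reduces to keeping $d_\calC(x(t))$ strictly below $\tfrac{1}{2\alpha}$, and this is where smallness of $K$ enters.

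First I would set $M := \max_{y \in \calS_\ell}\|\nabla\Phi(y)\|$, which is finite since $\calS_\ell$ is compact and $\nabla\Phi$ is continuous on a neighborhood of $\calC$. As long as a solution remains in the domain $\calC + \tfrac{1}{2\alpha}\interior\bbB$, \cref{prop:prox} makes $d_\calC^2$ differentiable with gradient $2(x - \o{x})$, so the chain rule gives, writing $W(t) := d_\calC^2(x(t))$,
\begin{align*}
	\dot W = 2\langle x - \o{x}, \dot x\rangle = -2\langle x - \o{x}, \nabla\Phi(\o{x})\rangle - \tfrac{2}{K}\|x - \o{x}\|^2 \, .
\end{align*}
Using $\o{x}(t)\in\calS_\ell$ with Cauchy--Schwarz, $|\langle x-\o{x}, \nabla\Phi(\o{x})\rangle| \le M\,d_\calC(x)$, followed by the weighted Young inequality $2M\,d_\calC(x) \le \tfrac{1}{K}d_\calC^2(x) + KM^2$, I obtain the linear differential inequality $\dot W \le -\tfrac{1}{K}W + KM^2$. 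A comparison (Gr\"onwall) argument then yields $W(t) \le \max(W(0),\, K^2M^2)$, i.e. $d_\calC(x(t)) \le \max(d_\calC(x(0)),\, KM)$ throughout the existence interval.

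It remains to fix $K$. Choosing $K^\star := \tfrac{1}{2\alpha M}$ (any $K^\star>0$ if $M=0$) ensures $KM < \tfrac{1}{2\alpha}$ for every $K \in (0,K^\star)$; together with $d_\calC(x(0)) < \tfrac{1}{2\alpha}$ (since $x(0)\in\calC_\ell$), the estimate above gives $d_\calC(x(t)) \le \beta$ for the fixed constant $\beta := \max(d_\calC(x(0)),\,KM) < \tfrac{1}{2\alpha}$. Hence $x(t)$ never leaves $\calC_\ell$, which proves invariance. For completeness I would run the continuation argument on the maximal interval $[0,T_{\max})$: there the solution is confined to $\Omega_\beta := \{x : d_\calC(x)\le\beta,\ P_\calC(x)\in\calS_\ell\}$, which is closed, bounded (by the reasoning of \cref{lem:cl_bounded}), and contained in the open set $\calC + \tfrac{1}{2\alpha}\interior\bbB$ on which $F$ is continuous and bounded. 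If $T_{\max}<\infty$, then $x$ is Lipschitz and converges to a limit point in $\Omega_\beta \subset \calC + \tfrac{1}{2\alpha}\interior\bbB$, from which \cref{lem:exist} restarts the solution, contradicting maximality; therefore $T_{\max}=\infty$.

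The main obstacle is the distance estimate. One must exploit that $\nabla\Phi$ is evaluated at the projected point $\o{x}$, which lies in the compact set $\calS_\ell$ and is exactly what makes $M$ a \emph{uniform} bound, and then balance the outward drift $2M\,d_\calC$ against the inward anti-windup contraction $-\tfrac{2}{K}d_\calC^2$ through the right weighting so that the $\tfrac1K$-term dominates. The subtlety to watch is that both the differential inequality and the bound $M$ are valid only while the solution stays in the domain, so invariance and the continuation argument must be interleaved rather than assumed a priori.
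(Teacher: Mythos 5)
Your proof is correct and follows essentially the same route as the paper's: bound $\|\nabla\Phi\|$ by $M$ on $\calS_\ell$ via \cref{lem:nonincreas}, show that the derivative of $d_\calC^2$ along solutions forces $d_\calC(x(t)) \le \max(d_\calC(x(0)), KM) < \tfrac{1}{2\alpha}$ for $K < K^\star = \tfrac{1}{2\alpha M}$, and combine invariance of $\calC_\ell$ with its boundedness (\cref{lem:cl_bounded}) to rule out finite-time escape. The only differences are cosmetic: you obtain the distance bound via Young's inequality and a Gr\"onwall comparison where the paper simply uses the sign of the Lie derivative when $d_\calC(x) > KM$, and you spell out the continuation argument (and the degenerate case $M=0$) in more detail than the paper does.
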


\begin{proof}
	First, note that \cref{lem:exist} guarantees the existence of a (local) solution $x :[0, T] \rightarrow \calC + \frac{1}{2\alpha} \interior \bbB$ for any initial condition $x(0) \in \calC_\ell \subset \calC + \frac{1}{2\alpha} \interior \bbB$ and for some $T> 0$.

	Since \cref{lem:nonincreas} guarantees that $\Phi(\o{x}(t)) \leq \Phi(\o{x}(0)) \leq \ell$ for all $t \in [0, T]$, it follows that $\o{x}(t) \in \calS_\ell$ for all $[0, T]$.

	By compactness of $\calS_\ell$, there exists $M > 0$, such that $\| \nabla \Phi(y)\| \leq M$ for all $y \in \calS_\ell$. Now, consider the Lie derivative of $d_\calC^2$ along~\eqref{eq:grad_aw_approx}. For $x \in \calC_\ell$ we have
	\begin{equation*}%\label{eq:lie_bound}
		\begin{split}
			\calL_{F} d^2_\calC(x) & = \left\langle x - \o{x}, - \nabla \Phi(\o{x}) - \tfrac{1}{K}(x - \o{x}) \right\rangle \\
		                         & \leq d_\calC(x) \| \nabla \Phi(\o{x})\|  - \tfrac{1}{K} d^2_\calC(x)                                 \\
		                         & \leq d_\calC(x) \left( M - \tfrac{1}{K} d_\calC(x) \right) \, .
		\end{split}
	\end{equation*}
	It follows that $\calL_{F} d^2_\calC(x) < 0$ for all $x \in \calC_\ell$ for which $d_\calC(x) > KM$. In particular, if $K < K^\star  := \frac{1}{2\alpha M}$, any solution $x$ of~\eqref{eq:grad_aw_approx} starting in $\calC_\ell$ cannot leave the neighborhood $\calC + \frac{1}{2\alpha} \interior \bbB$ on which $P_\calC$ is single-valued. In addition, $\o{x} = P_\calC(x)$ remains in $\calS_\ell$. Hence $\calC_\ell$ is invariant. Together with the boundedness of $\calC_\ell$, finite-time escape is precluded and thus guaranteeing the existence of a complete solution.
\end{proof}

\begin{proposition}\label{prop:invar}
    Under \cref{ass:basic} any complete solution $x: [0, \infty) \rightarrow \calC_\ell$ of~\eqref{eq:grad_aw_approx} converges to the largest weakly invariant subset $\calS$ of $\calM := \cl \{ x  \in \calC_\ell \, | \, DP_\calC(x; F(x)) = 0 \}.$
\end{proposition}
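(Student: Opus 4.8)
The plan is to invoke the invariance principle (\cref{thm:invar}) with the natural Lyapunov candidate $V := \Phi \circ P_\calC$, and to identify the limit set it produces with the claimed set $\calM$.

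First I would verify the hypotheses of \cref{thm:invar}. \cref{prop:comp_bound} already supplies, for $K$ small enough, a complete and bounded solution $x$ with $x(t) \in \calC_\ell$ for all $t$, so I take $\calU := \calC_\ell$ and $V := \Phi \circ P_\calC$, which is continuous on $\calC_\ell$ since $P_\calC$ is continuous there (\cref{lem:lip_prox}) and $\Phi$ is continuous. For the growth bound I set $u(x) := \langle \nabla \Phi(\o{x}), DP_\calC(x; F(x)) \rangle$ wherever the directional derivative exists. The computation in \cref{lem:nonincreas} shows that along the solution, for almost every $t$, $\tfrac{d}{dt}V(x(t)) = \langle -\dot x(t), \dot{\o{x}}(t) \rangle \leq 0$, where $\dot{\o{x}}(t) = DP_\calC(x(t); \dot x(t))$ by \cref{lem:proj_traj_basic}. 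Integrating gives exactly the integral growth bound required by \cref{thm:invar} with $u \le 0$ on $\calU$, so the hypotheses hold.

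The invariance principle then yields that $x$ approaches the largest weakly invariant subset $\calS$ of $V^{-1}(r) \cap \calU \cap \cl u^{-1}(0)$ for some $r$. The key step is to show that this set coincides with $\calM = \cl\{x \in \calC_\ell \mid DP_\calC(x; F(x)) = 0\}$, and this is where \cref{lem:proj_equi} does the decisive work. On the invariant set, $u(x) = 0$, i.e., $\langle \nabla \Phi(\o{x}), \o{v} \rangle = 0$ with $\o{v} := DP_\calC(x; F(x))$. Rewriting as in \cref{lem:nonincreas}, $\nabla \Phi(\o{x}) = -F(x) - \tfrac{1}{K}(x - \o{x}) \cdot(\text{sign})$; more precisely $-\dot x = \nabla\Phi(\o x) + \tfrac1K(x-\o x)$, and since \cref{lem:orth_prop} gives $\langle \o{v}, x - \o{x} \rangle = 0$, the condition $u(x)=0$ reduces to $\langle -F(x), \o v\rangle = \langle \dot x, \o v\rangle = \langle F(x), \o v \rangle$... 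I would instead argue directly: $u(x) = \langle -\dot x, \o v\rangle$ and by the orthogonality of \cref{lem:orth_prop} the anti-windup term drops out, leaving $\langle \dot x, \o v \rangle = \langle F(x), \o v\rangle$, and by \cref{lem:proj_traj_mono} this quantity $\langle F(x), DP_\calC(x;F(x))\rangle$ is nonnegative. Hence $u(x) = 0$ forces $\langle F(x), \o v \rangle = 0$, and \cref{lem:proj_equi} (the equivalence $\langle v, \o v \rangle = 0 \Leftrightarrow \o v = 0$) then forces $\o v = DP_\calC(x; F(x)) = 0$. Thus $\calU \cap \cl u^{-1}(0) \subseteq \calM$, and since $\calM$ is closed, $\calS \subseteq \calM$; because $\calS$ is the set the trajectory approaches, this gives convergence of $x$ to $\calM$, whence $\o x = P_\calC \circ x$ converges to $P_\calC(\calM)$.

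The main obstacle I anticipate is handling the closure and the almost-everywhere nature of the directional derivative carefully: $DP_\calC(x; F(x))$ need not exist at every point of $\calC_\ell$, so the set $\{x \mid DP_\calC(x;F(x)) = 0\}$ and its closure must be interpreted as in the statement, and I must ensure the growth-bound inequality of \cref{thm:invar} is used with $u$ defined (or bounded above by $0$) on all of $\calU$ rather than only where the derivative exists. Matching the abstract limit set $V^{-1}(r) \cap \calU \cap \cl u^{-1}(0)$ to $\calM$ requires the inclusion $\cl u^{-1}(0) \cap \calU \subseteq \calM$ together with weak invariance of $\calS$; the reverse containment is not needed since we only claim convergence to $\calM$, and $\calS \subseteq \calM$ suffices. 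This bookkeeping around the closure, and the correct sign accounting in reducing $u(x)=0$ to $\langle F(x), \o v\rangle = 0$ via \cref{lem:orth_prop}, is the delicate part; the rest follows mechanically from the lemmas already established.
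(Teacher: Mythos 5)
Your proposal is correct and follows essentially the same route as the paper's own proof: apply \cref{thm:invar} with $V = \Phi \circ P_\calC$ and $\calU = \calC_\ell$, obtain the decrease condition from the computation in \cref{lem:nonincreas} (which already encodes the orthogonality of \cref{lem:orth_prop} and the monotonicity of \cref{lem:proj_traj_mono}), and identify $u^{-1}(0)$ with $\{x \in \calC_\ell \mid DP_\calC(x;F(x)) = 0\}$ via \cref{lem:proj_equi}. The only difference is a technicality you gloss over and the paper handles explicitly: \cref{thm:invar} requires $V$ to be continuous on all of $\bbR^n$, so the paper takes $V$ to be any continuous function agreeing with $\Phi \circ P_\calC$ on $\calC + \tfrac{1}{2\alpha}\interior\bbB$, where the latter is well-defined and continuous by \cref{lem:lip_prox}.
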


\begin{proof}
	Note that $\Phi \circ P_\calC$ is continuous on $\calC + \frac{1}{2\alpha} \interior \bbB$ by continuity of $\Phi$ and \cref{lem:lip_prox}. Hence, to apply \cref{thm:invar}, let $V: \bbR^n \rightarrow \bbR$ be any continuous function such that $V(y) = \Phi(P_\calC(y))$ for all $y \in \calC + \frac{1}{2\alpha} \interior \bbB$. Further, let $\calU := \calC_\ell$. The trajectory $x$ is complete by assumption and bounded by Lemma~\ref{lem:cl_bounded}. Hence, according to \cref{thm:invar}, $x$ converges to the largest weakly invariant subset of $V^{-1}(r) \cap \calU \cap \cl u^{-1}(0) $ for some $r$ and where we have
	\begin{align*}
		u^{-1}(0)
		 & = \{ x   \in \calU \, | \, \left\langle F(x), DP_\calC(x; F(x)) \right\rangle  = 0 \} \\
		 & = \{ x   \in \calU \, | \, DP_\calC(x; F(x)) = 0 \} \, ,
	\end{align*}
	where the second equality follows from \cref{lem:proj_equi}.
\end{proof}

It is important to note that $\cl \{ x   \in \calC_\ell \, | \, DP_\calC(x; F(x)) = 0 \}$ is not, in general, invariant itself. There can exist compact intervals $[t_1, t_2]$ on which $\o{x}$ is constant (and hence $\d{\o{x}}(t) = 0$ for all $t \in [t_1, t_2]$), but on which $x$ is not stationary. For example, in Fig.~\ref{fig:proj_traj}, this is the case when $\o{x}(t) = P_\calC(x(t))$ is stuck in one of the vertices of the feasible polyhedron $\calC$, while $x$ is evolving outside of $\calC$, moving ``around the corner''.

\subsection{Characterization of Invariant Limit Set}

Next, we show that the largest weakly invariant subset $\calS$ in \cref{prop:invar} is equivalent to the critical points of~\eqref{eq:basic_prob}. 

\begin{lemma}\label{lem:invar_proj}
	Consider the setup of \cref{prop:invar} and let  $x : [0, \infty) \rightarrow \calS$ be a complete solution of~\eqref{eq:grad_aw_approx} evolving on the weakly invariant set $\calS$. Then, $\o{x}(t) = \o{x}(0)$ holds.
\end{lemma}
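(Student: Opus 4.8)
The plan is to show that $\o{x}$ has vanishing time derivative almost everywhere and then invoke absolute continuity to conclude that $\o{x}$ is constant. Since the solution evolves on $\calS \subseteq \calC_\ell$, it is bounded (\cref{lem:cl_bounded}) and, by \cref{lem:proj_traj_basic}, the projected trajectory $\o{x} = P_\calC \circ x$ is absolutely continuous with $\d{\o{x}}(t) = DP_\calC(x(t); \d{x}(t))$ for almost all $t$. Because the trajectory stays in $\calC + \tfrac{1}{2\alpha}\interior\bbB$, where $F$ is single-valued by \cref{lem:exist}, we have $\d{x}(t) = F(x(t))$, and hence $\d{\o{x}}(t) = DP_\calC(x(t); F(x(t)))$ almost everywhere.

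Next I would reproduce the chain-rule computation from \cref{lem:nonincreas}. Using \cref{lem:orth_prop} to discard the term $\tfrac{1}{K}\langle x - \o{x}, \d{\o{x}}\rangle = 0$, one obtains, for almost all $t$,
\begin{align*}
	\tfrac{d}{dt}\Phi(\o{x}(t)) = \left\langle \nabla\Phi(\o{x}(t)), \d{\o{x}}(t)\right\rangle = \left\langle -F(x(t)), \d{\o{x}}(t)\right\rangle \, .
\end{align*}
The decisive additional input is that $\calS$ lies in a single level set of $\Phi\circ P_\calC$: the invariance principle (\cref{thm:invar}) places the limit set inside $V^{-1}(r)$ for some $r$, so that $\Phi(\o{x}(t)) = V(x(t)) = r$ is constant along the solution. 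Consequently $\tfrac{d}{dt}\Phi(\o{x}(t)) = 0$, which forces $\langle F(x(t)), \d{\o{x}}(t)\rangle = 0$ for almost all $t$.

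Finally, I would apply \cref{lem:proj_equi} with $v = F(x(t))$ and $\o{v} = \d{\o{x}}(t) = DP_\calC(x(t); F(x(t)))$: the equality $\langle v, \o{v}\rangle = 0$ yields $\o{v} = \d{\o{x}}(t) = 0$ for almost all $t$. Since $\o{x}$ is absolutely continuous with derivative zero almost everywhere, it is constant, i.e., $\o{x}(t) = \o{x}(0)$ for all $t$.

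The step I expect to be the main obstacle is asserting that $\Phi\circ\o{x}$ is genuinely \emph{constant} rather than merely non-increasing. One must be careful that, although $\calS \subseteq \calM = \cl\{\,x \mid DP_\calC(x; F(x)) = 0\,\}$, membership in the closure does not by itself imply $DP_\calC(x(t); F(x(t))) = 0$; indeed the map $x \mapsto DP_\calC(x; F(x))$ need not be continuous, which is exactly why the closure appears in \cref{prop:invar} and why $\calM$ itself fails to be invariant (as the vertex example following that proof illustrates). The energy argument circumvents this gap: constancy of $\Phi\circ P_\calC$ on the limit set $\calS$, together with the strict equivalence of \cref{lem:proj_equi}, rules out any instant at which $\d{\o{x}}(t) \neq 0$.
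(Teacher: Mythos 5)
Your route is genuinely different from the paper's: the paper's proof simply asserts that $\d{\o{x}}(t)=0$ holds for almost all $t$ because the solution evolves in $\calS\subset\calM$, and then integrates; you instead try to derive $\d{\o{x}}(t)=0$ from an energy identity (constancy of $\Phi\circ\o{x}$) combined with the strict equivalence of \cref{lem:proj_equi}. You also correctly flag the delicate point that motivates your detour: since $\calM$ is a \emph{closure}, membership in $\calM$ does not by itself force $DP_\calC(x(t);F(x(t)))=0$, so the paper's one-line justification is not airtight. However, your repair has a genuine gap of its own. The claim that ``$\calS$ lies in a single level set of $\Phi\circ P_\calC$'' is false for $\calS$ as defined in \cref{prop:invar}: there $\calS$ is the largest weakly invariant subset of $\calM$ \emph{alone}, with no level-set restriction, and it contains every equilibrium of $F$ in $\calC_\ell$ (if $F(x^\star)=0$ then $DP_\calC(x^\star;F(x^\star))=DP_\calC(x^\star;0)=0$, so $x^\star\in\calM$, and a singleton equilibrium is weakly invariant). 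Such equilibria are points $y-K\nabla\Phi(y)$ with $y$ a critical point of~\eqref{eq:basic_prob}, and two critical points $y_1,y_2\in\calS_\ell$ with $\Phi(y_1)\neq\Phi(y_2)$ yield points of $\calS$ on \emph{different} level sets of $V=\Phi\circ P_\calC$. The level-set conclusion of \cref{thm:invar} attaches to the limit set of a \emph{particular} converging solution, i.e., to the largest weakly invariant subset of $V^{-1}(r)\cap\calU\cap\cl u^{-1}(0)$ for the $r$ associated with that solution; it is not a property of the largest weakly invariant subset of $\calM$, which is the set your lemma quantifies over. Hence your step ``$\Phi(\o{x}(t))=r$ for all $t$'' is unjustified as written, and nothing else in your argument supplies it (\cref{lem:nonincreas} only gives monotonicity, not constancy).

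The natural repair is to anchor your argument to the set that \cref{thm:invar} actually delivers: strengthen \cref{prop:invar} to state convergence to the largest weakly invariant subset $\calS_r$ of $V^{-1}(r)\cap\calC_\ell\cap\calM$ for some $r$, and prove the lemma for solutions evolving in $\calS_r$. Then $V(x(t))=r$ for all $t$ is immediate, and the rest of your proof is correct: \cref{lem:proj_traj_basic} gives absolute continuity and $\d{\o{x}}(t)=DP_\calC(x(t);F(x(t)))$ a.e.\ (since $F$ is single-valued and continuous there, solutions satisfy the ODE pointwise), the chain rule with \cref{lem:orth_prop} gives $\tfrac{d}{dt}\Phi(\o{x}(t))=\langle -F(x(t)),\d{\o{x}}(t)\rangle=0$ a.e., \cref{lem:proj_equi} upgrades this to $\d{\o{x}}(t)=0$ a.e., and integration finishes. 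This modification propagates harmlessly: $\calS_r\subset\calS$, and \cref{prop:opt_cond,thm:main} only need the limit set to project onto critical points. With that fix, your argument is arguably more robust than the paper's, precisely because it never needs to decide whether points of $\calM$ with $DP_\calC(x;F(x))\neq 0$ can be visited by solutions remaining in $\calM$.
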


\begin{proof}
	Since $\o{x}$ is absolutely continuous, it follows that $\o{x}(\tau) - \o{x}(0) = \int_0^\tau \d{\o{x}}(t) dt$. However, $\d{\o{x}}(t) = 0$ holds for almost all $t \geq 0$ since, by invariance, $\o{x}(t) \in \calS \subset \calM$ and therefore $\o{x}(\tau) = \o{x}(0)$.
\end{proof}

\begin{proposition}\label{prop:opt_cond}
	Consider the setup of \cref{prop:invar}. Then, every $x^\star \in P_\calC(\calS)$ is a critical point of~\eqref{eq:basic_prob}.
\end{proposition}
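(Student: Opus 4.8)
The plan is to exploit the observation, made in the remark following \cref{prop:invar}, that although the projection $\o{x}$ is frozen along any solution evolving on $\calS$ (by \cref{lem:invar_proj}), the unprojected state $x$ may still drift. The key is that freezing $\o{x}$ collapses the dynamics \eqref{eq:grad_aw_approx} into an elementary linear relaxation in the normal direction, whose equilibrium encodes precisely the first-order optimality condition $-\nabla\Phi(x^\star) \in N_{x^\star}\calC$.

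Concretely, I would fix $x^\star \in P_\calC(\calS)$ and pick $x_0 \in \calS$ with $P_\calC(x_0) = x^\star$. Since $\calS$ is weakly invariant, there exists a complete solution $x: [0,\infty) \to \calS$ of \eqref{eq:grad_aw_approx} with $x(0) = x_0$, and by \cref{lem:invar_proj} we have $\o{x}(t) = P_\calC(x(t)) = x^\star$ for all $t \geq 0$. Substituting this constant value into the right-hand side of \eqref{eq:grad_aw_approx} and writing $\eta(t) := x(t) - x^\star$, the trajectory then satisfies the \emph{linear} differential equation
\begin{align*}
	\d{\eta}(t) = - \nabla \Phi(x^\star) - \tfrac{1}{K}\, \eta(t) \, .
\end{align*}
Moreover, since $x(t) \in \calC + \tfrac{1}{2\alpha} \interior \bbB$ and $P_\calC(x(t)) = x^\star$, \cref{lem:proj_preimag} guarantees $\eta(t) = x(t) - P_\calC(x(t)) \in N_{x^\star} \calC$ for every $t \geq 0$.

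To finish, I would solve the linear ODE in closed form, $\eta(t) = -K \nabla\Phi(x^\star) + e^{-t/K}\big(\eta(0) + K\nabla\Phi(x^\star)\big)$, which yields $\eta(t) \to -K\nabla\Phi(x^\star)$ as $t \to \infty$ because $K > 0$. Since $N_{x^\star}\calC$ is a closed cone and $\eta(t) \in N_{x^\star}\calC$ for all $t$, the limit satisfies $-K\nabla\Phi(x^\star) \in N_{x^\star}\calC$, and scaling by $1/K > 0$ gives $-\nabla\Phi(x^\star) \in N_{x^\star}\calC$, i.e., $x^\star$ is critical for \eqref{eq:basic_prob}.

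The conceptual obstacle this argument must navigate is exactly the drift phenomenon: one cannot conclude criticality merely from $\d{\o{x}} = 0$, because $x$ itself need not be stationary on $\calS$. The resolution is that the drift is a pure exponential relaxation of $\eta$ toward the equilibrium $-K\nabla\Phi(x^\star)$, so the ``motion around the corner'' is precisely what exposes the optimality condition in the limit. The only technical care needed is to verify that $\eta(t)$ remains in the single-valued neighborhood so that \cref{lem:proj_preimag} applies throughout, and that the closedness of $N_{x^\star}\calC$ transfers membership to the limit; the former is ensured for $K < K^\star$ by the invariance of $\calC_\ell$ established in \cref{prop:comp_bound}.
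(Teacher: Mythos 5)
Your proof is correct and follows essentially the same route as the paper's: freeze $\o{x}$ via \cref{lem:invar_proj}, use \cref{lem:proj_preimag} to confine $x(t)-x^\star$ to $N_{x^\star}\calC$, and extract the criticality condition $-\nabla\Phi(x^\star)\in N_{x^\star}\calC$ from the limit of the resulting asymptotically stable linear ODE. Your version is marginally more explicit than the paper's (closed-form solution of the ODE and invoking closedness of the normal cone to pass membership to the limit), but the argument is the same.
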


\begin{proof}
	Consider a trajectory $x: [0, \infty) \rightarrow \calS$ evolving on the weakly invariant set $\calS$. By \cref{lem:invar_proj}, we have that $\o{x}(t) = \o{x}(0) =: y$ for all $t \geq 0$. Therefore, $x$ evolves on the preimage $P_\calC^{-1}(y)$ which, using \cref{lem:proj_preimag}, is given by $y + N_y \calC$. In other words, $x(t) \in y + N_y \calC$ for all $t \geq 0$. In particular, $x$ satisfies $\dot x(t) = - \nabla \Phi(y) - \tfrac{1}{K}(x(t) - y)$
	for all $t \geq 0$. Thus, $x$ is also the solution of an asymptotically stable linear system and converges to a point $\hat{x}$ such that $- \nabla \Phi(y) = \frac{1}{K}(\hat{x} - y) \in N_y \calC$ and $P_\calC (\hat{x}) = y$ hold. In other words, $y$ is a critical point.
\end{proof}

\cref{thm:main} now follows directly since \cref{prop:comp_bound} yields the existence of a complete solution and \cref{prop:invar,prop:opt_cond} guarantee the convergence of $\o{x}$ to the set of critical points.

\section{Proof Sketch for \cref{thm:main2}}

\cref{thm:main2} does not directly derive from \cref{thm:main} by letting $\alpha \rightarrow 0^+$, because $\lim_{\alpha \rightarrow 0^+} \calC_\ell$ is not bounded. Instead, we need to adapt \cref{prop:comp_bound} as follows:
\begin{proposition}\label{prop:comp_bound_cvx}
	Let \cref{ass:basic} hold and let $\calC$ be convex. Then~\eqref{eq:grad_aw_approx} admits a complete and bounded solution for every initial condition $x(0) \in \bbR^n$ and all $K > 0$.
\end{proposition}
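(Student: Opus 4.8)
The plan is to mirror the argument of \cref{prop:comp_bound}, exploiting the fact that convexity of $\calC$ renders $P_\calC$ single-valued and (globally) nonexpansive on all of $\bbR^n$. This removes both the restriction to the neighborhood $\calC + \tfrac{1}{2\alpha}\interior\bbB$ and the need for an upper bound on $K$. First I would establish local existence exactly as in \cref{lem:exist}: since $P_\calC$ is now single-valued and continuous on all of $\bbR^n$ and $\nabla\Phi$ is continuous, the map $F$ in \eqref{eq:grad_aw_approx} is a continuous vector field on $\bbR^n$, so standard ODE theory yields a local solution from any $x(0) \in \bbR^n$. The remaining task is to obtain a priori bounds that preclude finite-time escape.

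Next I would fix a maximal local solution $x$ and set $\ell_0 := \Phi(\o{x}(0))$. The key observation is that the monotonicity result \cref{lem:nonincreas} carries over verbatim to the convex setting for every $x(t) \in \bbR^n$: its proof rests on \cref{lem:proj_traj_mono,lem:orth_prop,lem:proj_traj_basic}, and in the convex case the corresponding facts hold at every point of $\bbR^n$, as recorded in \cref{lem:dirderiv_cvx}. Hence $\Phi(\o{x}(t)) \leq \ell_0$, so that $\o{x}(t)$ remains in the compact sublevel set $\calS_{\ell_0}$, and there exists $M > 0$ with $\|\nabla\Phi(y)\| \leq M$ for all $y \in \calS_{\ell_0}$.

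Finally I would bound the distance to $\calC$ by the same Lyapunov-type estimate. Using $\nabla d_\calC^2(x) = 2(x - \o{x})$, which for convex $\calC$ is valid on all of $\bbR^n$, the identical computation gives $\calL_{F} d_\calC^2(x) \leq d_\calC(x)\bigl(M - \tfrac{1}{K}d_\calC(x)\bigr)$, which is strictly negative whenever $d_\calC(x) > KM$. Consequently $d_\calC(x(t)) = \|x(t) - \o{x}(t)\| \leq \max\{d_\calC(x(0)), KM\}$ along the trajectory, and combined with $\o{x}(t) \in \calS_{\ell_0}$ this confines $x(t)$ to the compact set $\calS_{\ell_0} + \max\{d_\calC(x(0)), KM\}\,\bbB$. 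The solution is therefore bounded, and boundedness on the maximal interval of existence rules out finite-time blow-up, so the solution is complete.

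The point where this argument genuinely differs from—and is in fact simpler than—\cref{prop:comp_bound} is that the distance estimate now holds for \emph{any} $K > 0$. In the prox-regular case one imposed $K < \tfrac{1}{2\alpha M}$ solely to keep the trajectory inside the single-valued region $\calC + \tfrac{1}{2\alpha}\interior\bbB$, a constraint that becomes vacuous once $P_\calC$ is single-valued on all of $\bbR^n$. Thus there is no real obstacle, only a simplification; the sole care needed is to confirm that the monotonicity and directional-derivative lemmas of \cref{sec:dir_deriv}, originally stated on $\calC + \tfrac{1}{2\alpha}\interior\bbB$, extend to all of $\bbR^n$ for convex $\calC$, which \cref{lem:dirderiv_cvx} provides.
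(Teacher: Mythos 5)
Your proposal is correct and follows essentially the same route as the paper's own proof: both adapt \cref{prop:comp_bound} by noting that global single-valuedness of $P_\calC$ removes the upper bound on $K$, use \cref{lem:nonincreas} to confine $\o{x}(t)$ to $\calS_{\ell}$, and apply the identical Lie-derivative estimate on $d_\calC^2$ to trap $x(t)$ in a bounded set of the form $\calS_{\ell} + \gamma\bbB$ with $\gamma \geq \max\{KM, d_\calC(x(0))\}$, which precludes finite-time escape. Your added care in checking that the directional-derivative lemmas extend to all of $\bbR^n$ via \cref{lem:dirderiv_cvx} is exactly the justification the paper leaves implicit.
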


\begin{proof}
    The proof is analogous to the proof of \cref{prop:comp_bound}. In particular, we have $\calL_F d_\calC^2 (x) < 0$ for all $x \in \calC_\ell$ for which $d_\calC(x) > KM$. However, since $P_\calC$ is globally single-valued, $K$ does not need to be chosen small enough to guarantee the invariance of a neighborhood $\calC + \frac{1}{2\alpha}\interior \bbB$. Instead, we have
	\begin{align*}
		x(t) \in \calC + \gamma \bbB \qquad \forall t \in [0, T]
	\end{align*}
	for all $t \geq 0$ with $\gamma > \max \{ KM, d_\calC(x(0)) \}$. More precisely,
	\begin{align*}
	    x(t) \in \calC^\gamma_\ell := \{ y \in  \calC + \gamma \bbB \, | \, P_\calC(y) \in \calS_\ell \} \, ,
	\end{align*}
	for all $t \geq 0$ and where $\ell := \Phi(P_\calC(x(0)))$. This follows from Lemma~\ref{lem:nonincreas} since $t \mapsto \Phi(P_\calC(x(t)))$ is non-increasing. Using the same argument as for Lemma~\ref{lem:cl_bounded}, we can show that $\calC^\gamma_\ell$ is bounded.
\end{proof}

Finally, \cref{prop:invar,prop:opt_cond} can be adapted using $ \calC^\gamma_\ell$ instead of $\calC_\ell$ and \cref{thm:main2} follows similarly to \cref{thm:main}.

\section{Conclusions}\label{sec:concl}

We have studied the convergence properties of anti-windup gradient flows and established semi-global convergence of projected trajectories for prox-regular domains. For convex domains convergence is global for any anti-windup gain. Using properties of projected trajectories we have hence been able generalize~\cite[Th. 6.4]{hauswirthAntiWindupApproximationsOblique2020} for gradient flows. However, it remains open whether the same analysis can also yield stronger convergence results for anti-windup approximations of other optimization dynamics such as variable-metric gradient using oblique projections \cite{hauswirthProjectedDynamicalSystems2018} or saddle-point flows. For preliminary results in these directions, as well as simulation results, the reader is referred to \cite{hauswirthAntiWindupApproximationsOblique2020}.

\bibliography{bibliography}

% Generated by IEEEtran.bst, version: 1.14 (2015/08/26)
\begin{thebibliography}{10}
\providecommand{\url}[1]{#1}
\csname url@samestyle\endcsname
\providecommand{\newblock}{\relax}
\providecommand{\bibinfo}[2]{#2}
\providecommand{\BIBentrySTDinterwordspacing}{\spaceskip=0pt\relax}
\providecommand{\BIBentryALTinterwordstretchfactor}{4}
\providecommand{\BIBentryALTinterwordspacing}{\spaceskip=\fontdimen2\font plus
\BIBentryALTinterwordstretchfactor\fontdimen3\font minus
  \fontdimen4\font\relax}
\providecommand{\BIBforeignlanguage}[2]{{%
\expandafter\ifx\csname l@#1\endcsname\relax
\typeout{** WARNING: IEEEtran.bst: No hyphenation pattern has been}%
\typeout{** loaded for the language `#1'. Using the pattern for}%
\typeout{** the default language instead.}%
\else
\language=\csname l@#1\endcsname
\fi
#2}}
\providecommand{\BIBdecl}{\relax}
\BIBdecl

\bibitem{zaccarianModernantiwindupsynthesis2011}
L.~Zaccarian and A.~R. Teel, \emph{Modern Anti-Windup Synthesis: Control
  Augmentation for Actuator Saturation}.\hskip 1em plus 0.5em minus 0.4em\relax
  {Princeton University Press}, 2011.

\bibitem{tarbouriechAntiwindupdesignoverview2009}
S.~Tarbouriech and M.~Turner, ``Anti-windup design: An overview of some recent
  advances and open problems,'' \emph{IET Control Theory Appl.}, vol.~3, no.~1,
  pp. 1--19, Jan. 2009.

\bibitem{hauswirthImplementationProjectedDynamical2020}
A.~Hauswirth, F.~D{\"o}rfler, and A.~R. Teel, ``On the {{Implementation}} of
  {{Projected Dynamical Systems}} with {{Anti}}-{{Windup Controllers}},'' in
  \emph{American {{Control Conference}} ({{ACC}}), 2020}, {Denver, CO}, Jul.
  2020, accepted.

\bibitem{hauswirthAntiWindupApproximationsOblique2020}
------, ``Anti-{{Windup Approximations}} of {{Oblique Projected Dynamical
  Systems}} for {{Feedback}}-based {{Optimization}},'' \emph{ArXiv200300478
  MathOC}, 2020.

\bibitem{hauswirthProjectedDynamicalSystems2018}
A.~Hauswirth, S.~Bolognani, and F.~D{\"o}rfler, ``Projected {{Dynamical
  Systems}} on {{Irregular}}, {{Non}}-{{Euclidean Domains}} for {{Nonlinear
  Optimization}},'' \emph{ArXiv180904831 MathOC}, 2018.

\bibitem{nagurneyProjectedDynamicalSystems1996}
A.~Nagurney and D.~Zhang, \emph{Projected {{Dynamical Systems}} and
  {{Variational Inequalities}} with {{Applications}}}, 1st~ed.\hskip 1em plus
  0.5em minus 0.4em\relax {Springer}, 1996.

\bibitem{aubinDifferentialInclusionsSetValued1984}
J.-P. Aubin and A.~Cellina, \emph{Differential {{Inclusions}}: {{Set}}-{{Valued
  Maps}} and {{Viability Theory}}}, ser. Grundlehren Der Mathematischen
  {{Wissenschaften}}.\hskip 1em plus 0.5em minus 0.4em\relax {Berlin
  Heidelberg}: {Springer}, 1984.

\bibitem{molzahnSurveyDistributedOptimization2017}
D.~K. Molzahn, F.~D{\"o}rfler, H.~Sandberg, S.~H. Low, S.~Chakrabarti,
  R.~Baldick, and J.~Lavaei, ``A {{Survey}} of {{Distributed Optimization}} and
  {{Control Algorithms}} for {{Electric Power Systems}},'' \emph{IEEE Trans.
  Smart Grid}, vol.~8, no.~6, pp. 2941--2962, Nov. 2017.

\bibitem{lowInternetCongestionControl2002}
S.~H. Low, F.~Paganini, and J.~C. Doyle, ``Internet congestion control,''
  \emph{IEEE Control Syst. Mag.}, vol.~22, no.~1, pp. 28--43, Feb. 2002.

\bibitem{hauswirthProjectedGradientDescent2016}
A.~Hauswirth, S.~Bolognani, G.~Hug, and F.~D{\"o}rfler, ``Projected gradient
  descent on {{Riemannian}} manifolds with applications to online power system
  optimization,'' in \emph{54th {{Annual Allerton Conference}} on
  {{Communication}}, {{Control}}, and {{Computing}}}, {Monticello, IL}, Sep.
  2016, pp. 225--232.

\bibitem{colombinoRobustnessGuaranteesFeedbackbased2019}
M.~Colombino, J.~W. {Simpson-Porco}, and A.~Bernstein, ``Towards robustness
  guarantees for feedback-based optimization,'' \emph{ArXiv190507363 Math}, May
  2019.

\bibitem{colombinoOnlineOptimizationFeedback2019}
M.~Colombino, E.~Dall'Anese, and A.~Bernstein, ``Online {{Optimization}} as a
  {{Feedback Controller}}: {{Stability}} and {{Tracking}},'' \emph{IEEE Trans.
  Control Netw. Syst.}, 2019.

\bibitem{tangRealTimeOptimalPower2017}
Y.~Tang, K.~Dvijotham, and S.~Low, ``Real-{{Time Optimal Power Flow}},''
  \emph{IEEE Trans. Smart Grid}, vol.~8, no.~6, pp. 2963--2973, Nov. 2017.

\bibitem{dallaneseOptimalPowerFlow2018}
E.~Dall'Anese and A.~Simonetto, ``Optimal {{Power Flow Pursuit}},'' \emph{IEEE
  Trans. Smart Grid}, vol.~9, no.~2, pp. 942--952, Mar. 2018.

\bibitem{lawrenceOptimalSteadyStateControl2018}
L.~S.~P. Lawrence, Z.~E. Nelson, E.~Mallada, and J.~W. {Simpson-Porco},
  ``Optimal {{Steady}}-{{State Control}} for {{Linear Time}}-{{Invariant
  Systems}},'' in \emph{2018 {{IEEE Conference}} on {{Decision}} and
  {{Control}} ({{CDC}})}, {Miami Beach, FL}, Dec. 2018, pp. 3251--3257.

\bibitem{fitzpatrickDifferentiabilityMetricProjection1982}
S.~Fitzpatrick and R.~R. Phelps, ``Differentiability of the {{Metric
  Projection}} in {{Hilbert Space}},'' \emph{Trans. Am. Math. Soc.}, vol. 270,
  no.~2, pp. 483--501, Apr. 1982.

\bibitem{shapiroDirectionallynondifferentiablemetric1994}
A.~Shapiro, ``Directionally nondifferentiable metric projection,'' \emph{J
  Optim Theory Appl}, vol.~81, no.~1, pp. 203--204, Apr. 1994.

\bibitem{kruskalTwoConvexCounterexamples1969}
J.~B. Kruskal, ``Two {{Convex Counterexamples}}: A {{Discontinuous Function}}
  and a {{Non}}-differentiable {{Nearest}}-{{Point Mapping}},'' \emph{Proc. Am.
  Math. Soc.}, vol.~23, pp. 697--703, 1969.

\bibitem{shapiroDifferentiabilityPropertiesMetric2016}
A.~Shapiro, ``Differentiability {{Properties}} of {{Metric Projections}} onto
  {{Convex Sets}},'' \emph{J Optim Theory Appl}, vol. 169, no.~3, pp. 953--964,
  Jun. 2016.

\bibitem{bonnansSensitivityAnalysisOptimization1998}
J.~F. Bonnans, R.~Cominetti, and A.~Shapiro, ``Sensitivity {{Analysis}} of
  {{Optimization Problems Under Second Order Regular Constraints}},''
  \emph{Mathematics of OR}, vol.~23, no.~4, pp. 806--831, Nov. 1998.

\bibitem{veermanNavigatingConvexSets2019}
J.~J.~P. Veerman, ``Navigating {{Around Convex Sets}},'' \emph{ArXiv190607281
  Math}, Jun. 2019.

\bibitem{goebelHybridDynamicalSystems2012}
R.~Goebel, R.~G. Sanfelice, and A.~R. Teel, \emph{Hybrid {{Dynamical Systems}}:
  {{Modeling}}, {{Stability}}, and {{Robustness}}}.\hskip 1em plus 0.5em minus
  0.4em\relax {PUP}, 2012.

\bibitem{rockafellarVariationalAnalysis2009}
R.~T. Rockafellar and R.~J.-B. Wets, \emph{Variational Analysis}, 3rd~ed., ser.
  Grundlehren Der Mathematischen {{Wissenschaften}}.\hskip 1em plus 0.5em minus
  0.4em\relax {Heidelberg}: {Springer}, 2009, no. 317.

\bibitem{adlyPreservationProxRegularitySets2016}
S.~Adly, F.~Nacry, and L.~Thibault, ``Preservation of {{Prox}}-{{Regularity}}
  of {{Sets}} with {{Applications}} to {{Constrained Optimization}},''
  \emph{SIAM J. Optim.}, vol.~26, no.~1, pp. 448--473, Jan. 2016.

\bibitem{filippovDifferentialEquationsDiscontinuous1988}
A.~F. Filippov, \emph{Differential {{Equations}} with {{Discontinuous Righthand
  Sides}}: {{Control Systems}}}, ser. Mathematics and Its {{Applications}}
  ({{Soviet Series}}).\hskip 1em plus 0.5em minus 0.4em\relax {Springer
  Netherlands}, 1988.

\bibitem{ryanIntegralInvariancePrinciple1998}
E.~P. Ryan, ``An {{Integral Invariance Principle}} for {{Differential
  Inclusions}} with {{Applications}} in {{Adaptive Control}},'' \emph{SIAM J.
  Control Optim.}, vol.~36, no.~3, pp. 960--980, May 1998.

\bibitem{bacciottiNonpathologicalLyapunovfunctions2006}
A.~Bacciotti and F.~Ceragioli, ``Nonpathological {{Lyapunov}} functions and
  discontinuous {{Carath{\'e}odory}} systems,'' \emph{Automatica}, vol.~42,
  no.~3, pp. 453--458, Mar. 2006.

\bibitem{hiriart-urrutyConvexAnalysisMinimization1996}
J.-B. {Hiriart-Urruty} and C.~Lemar{\'e}chal, \emph{Convex {{Analysis}} and
  {{Minimization Algorithms I}}. {{Fundamentals}}}, 2nd~ed., ser. Grundlehren
  Der Mathematischen {{Wissenschaften}}.\hskip 1em plus 0.5em minus 0.4em\relax
  {Berlin}: {Springer}, 1996, no. 305.

\bibitem{roydenRealAnalysis1988}
H.~Royden and P.~Fitzpatrick, \emph{Real {{Analysis}}}, 4th~ed.\hskip 1em plus
  0.5em minus 0.4em\relax {Pearson}, 1988.

\end{thebibliography}
\bibliographystyle{IEEEtran}

\end{document}